\documentclass[11pt]{article}
\usepackage{amsmath,amssymb,amsthm,comment}
\usepackage{amsfonts}
\usepackage{color}
\usepackage{epsfig}
\usepackage{mathtools}
\parindent0mm
\textwidth170mm
\textheight210mm
\oddsidemargin-5mm
\evensidemargin-5mm
\newtheorem{thm}{Theorem}[section]
\newtheorem{lemma}[thm]{Lemma}

\newtheorem{remark}[thm]{Remark}
\newtheorem{prop}[thm]{Proposition}

\makeatletter
    
    \@addtoreset{equation}{section}
  \makeatother
%\numberwithin{equation}{section}
%
%
%
\newcommand{\R}{\mathbb{R}}
\newcommand{\N}{\mathbb{N}}

\newcommand{\dt}{\partial_t}
\newcommand{\ep}{\varepsilon}
\newcommand{\dl}{\delta}
\newcommand{\lap}{\Delta}

\begin{document}
\title{Asymptotic Blow-up Behavior for the Semilinear Heat Equation \\with Super-exponential Nonlinearities}
\author{
Ryoto Ichiya\footnote{ichiya.ryoto.t2@dc.tohoku.ac.jp}\\
{\small Mathematical Institute, Tohoku University,}\\
{\small Sendai 980-8578, Japan}
}
\date{\small\today}
\maketitle
\begin{abstract}
 We consider the semilinear heat equation $u_t - \lap u = f(u)$ in $\Omega = B_R(0) \subset \R^n$ with super-exponential nonlinearities $f(u) = e^{u^p}u^q$ ($p>1$, $q \in \{0\}\cup [1,\infty)$), 
 nonnegative bounded radially symmetric initial data and 0-Dirichlet boundary condition.
 In this paper, we show the asymptotic blow-up behavior for nonnegative, radial type I blow-up solution. 
 More precisely, we prove that if $n \leq 2$, then such blow-up solution satisfies
 \begin{equation*}
  	\lim_{t \rightarrow T} \frac{T-t}{F(u(y\sqrt{T-t},t))} = 1, \quad \text{where } F(u) = \int_{u}^{\infty} \frac{ds}{f(s)}.
 \end{equation*}
 We note that this result corresponds to the one which is proved by Liu in 1989 for the case of $f(u) = e^u$, which has the scale invariance property unlike our super-exponential case.
 To prove the main result, we see the equation as a perturbation of the equation with $f(u) = e^u$ through a transformation introduced by Fujishima and Ioku in 2018 and estimate the additional term which appears after the transformation.  \\
{\bf Key words: Semilinear heat equation, Asymptotic blow-up behavior, Super-exponential nonlinearities, Scale invariance, Quasi-scaling}  \\
{\bf MSC Classification: 35B44, 35B40, 35K05} 
\end{abstract}

\section{Introduction}
We consider the following semilinear heat equation:
\begin{align}\label{SHE}
	\begin{cases}
	u_t =\Delta u + f(u)
	&\mathrm{in}\ \Omega \times (0,T), \\[1mm]
	u = 0 
	&\mathrm{on} \ \partial \Omega \times (0,T), \\[1mm]
	u(0,\cdot)=u_0
	&\mathrm{in}\ \Omega,
	\end{cases}
\end{align} 
where $f(u) = e^{|u|^{p-1}u}|u|^{q-1}u\ (p>1, q \in \{0\}\cup [1,\infty)),\ \Omega = B_R(0) \subset \R^n \ (n\geq 1,\ R>0),\ u_0 \in L^\infty (\Omega)$ and $T = T(u_0)$ denotes the maximal existence time of the solution $u$.
We further assume that $u_0$ is a nonnegative and radially nonincreasing function, which implies that the solution $u$ also shares the same properties (see \cite[Proposition 52.17*]{QSbook}). \par
Let us define a function $F:(0,\infty) \rightarrow \R$ by 
\begin{equation}\label{defF}
	F(u) = \int_{u}^{\infty} \frac{ds}{f(s)}.
\end{equation}
%When $q < 1$, the domain of $F$ can be extended to $[0,\infty)$.
It follows that the function $F$ is positive and monotonically decreasing, and thus admits the inverse function $F^{-1}$.
It is well known (see for example \cite{K63} or \cite[Theorem 17.3]{QSbook}) that for suitable initial data $u_0$, there exists a solution $u$ to \eqref{SHE} which blows up in finite time, i.e. $T(u_0)<\infty$ and $u$ satisfies

\begin{equation*}
	\lim_{t\rightarrow T} \|u(t)\|_{L^{\infty}} = \infty.
\end{equation*}

When the solution $u$ blows up in finite time, then $T$ is called the blow-up time. We say that $a \in \Omega$ is a blow-up point of $u$ if there exists a sequence $\{(a_i, t_i)\}_{i\in \N}$ such that $a_i \rightarrow a,\ t_i \rightarrow T$, and $|u(a_i,t_i)| \rightarrow \infty$ as $i \rightarrow \infty$.
If the blow-up solution $u$ satisfies 
\begin{equation}\label{type1}
		u(x,t) \leq F^{-1}(c(T-t)), \quad\   x \in \Omega,\ t\in (\tilde{t},T)
\end{equation}
for some $c>0$ and $\tilde{t} \in (0,T)$, then the blow-up is said to be of type I. If \eqref{type1} does not hold, then the blow-up is said to be of type II.
This blow-up rate is derived from the ODE: 
\begin{equation}\label{ODE}		
	y_t = f(y),\quad\ y(0) = y_0 > 0.
\end{equation}
The solution to $\eqref{ODE}$ is written by $y(t) = F^{-1}(T-t)$ for $T = F(y_0)$. 
By comparing the blow-up solution $u$ which blows up at $t = T \in (0,\infty)$ and the solution $y$ to \eqref{ODE} with $y_0 = F^{-1}(T)$, we obtain
\begin{equation}\label{belowrate}
	\|u(t)\|_{L^\infty(\Omega)} \geq F^{-1}(T-t)\quad\  \text{for}\ t \in (0,T) .
\end{equation}
Hence the estimate \eqref{type1} implies that the blow-up rate of the solution $u$ is the same as that of the ODE solution.

Let us review some known results about the blow-up behavior of the solution to the semilinear heat equation.
The blow-up behavior of the solution $u$ to \eqref{SHE} has been studied for decades in the cases of $f(u) = |u|^{p-1}u$ and $f(u) = e^u$.
When $f(u) = |u|^{p-1}u$, Giga and Kohn proved in \cite{GK87} that if $\Omega = \R^n$ or bounded convex domain, $p \in (1,p_s)$ and $u_0 \geq 0$, then the blow-up is of type I, 
where $p_s$ denotes the Sobolev exponent, i.e. $p_s = \infty$ if $n \leq 2$ and $p_s = \frac{n+2}{n-2}$ if $n \geq 3$.
Later on, Giga, Matsui and Sasayama showed in \cite{GMS04} that in the Sobolev subcritical case, any sign-changing blow-up solution to \eqref{SHE} with $f(u) = |u|^{p-1}u$ and $\Omega = \R^n$ is of type I. 
The authors of \cite{GK87} also considered the asymptotic blow-up behavior in \cite{GK85,GK87,GK89}. They showed that for any blow-up point $a \in \Omega$, if $p \in (1,p_s)$ and $\Omega$ is star-shaped about $a$, then
\begin{align}\label{ssblowup}
	\lim_{t \rightarrow T} (T-t)^{\frac{1}{p-1}}u(y\sqrt{T-t}+a, t) = \left(\frac{1}{p-1}\right)^{\frac{1}{p-1}}
\end{align}
uniformly on compact sets of $\R^n$. This limit is called the constant self-similar blow-up profile.
Since $(p-1)^{-\frac{1}{p-1}}(T-t)^{-\frac{1}{p-1}}$ is equal to $F^{-1}(T-t)$ for $f(u) = |u|^{p-1}u$, the property \eqref{ssblowup} means that the blow-up solution $u$ behaves like the ODE solution in the space-time parabolas $|x-a| \sim \sqrt{T-t}$.
In addition, there are a lot of studies on the blow-up behavior with $f(u) = |u|^{p-1}u$ (see for instance \cite{HVpre,MM04,QSbook,S19,V92} and references therein).

In the case of exponential nonlinearity $f(u) = e^u$, Liu proved in \cite{L89} that under the conditions of $n \leq 2$ and type I blow-up, the nonnegative blow-up solution $u$ satisfies 
\begin{align}\label{ssblowup2}
	\lim_{t\rightarrow T}(T-t)e^{u(y\sqrt{T-t}+a, t)} = 1
\end{align}
uniformly on compact sets in $\R^n$ for any blow-up point $a \in \Omega$. 
This result corresponds to \eqref{ssblowup} in the case of $f(u) = |u|^{p-1}u$ because $-\log (T-t)$ is the ODE solution when $f(u) = e^u$.
As in the case of $f(u) = |u|^{p-1}u$, various results for the blow-up behavior with exponential nonlinearity have also been found (see for example \cite{FP08,S22} and references therein).\par
Here, these results were obtained by using the scale invariance property of the equation $u_t - \lap u = f(u)$, where $f(u) = |u|^{p-1}u$ or $f(u) = e^u$.
When $\Omega = \R^n$, if a function $u$ satisfies the equation with $f(u) = |u|^{p-1}u$ or $f(u) = e^u$, then the equation is invariant under the transformation
\begin{equation}\label{sipower}
	u_\lambda(x,t) = \lambda^{\frac{2}{p-1}}u(\lambda x, \lambda^2 t), \quad \lambda > 0 
\end{equation}
for the case of power nonlinearity $f(u) = |u|^{p-1}u$, and
\begin{equation}\label{siexp}
	u_\lambda(x,t) = u(\lambda x, \lambda^2 t) + 2\log \lambda, \quad \lambda >0
\end{equation}
for the case of exponential nonlinearity $f(u) = e^u$. 
Since we have not found a transformation for general nonlinearities such as $f(u) = e^{|u|^{p-1}u}\ (p>1)$ which yields the scale-invariance yet, the results corresponding to the cases of $f(u) = |u|^{p-1}u$ and $f(u) = e^u$ cannot be obtained immediately.\\
However, the blow-up analysis for \eqref{SHE} without the scale invariance property is developing in recent years.
Hamza and Zaag proved in \cite{HZ22} that any blow-up solution of the equation \eqref{SHE} in whole space for $f(u) = |u|^{p-1}u\log^a(2+u^2)$ with $p \in (1,p_s)$ and $a \in \R$ is of type I.
Moreover, Fujishima and Kan studied the blow-up rate of the blow-up solution to \eqref{SHE} with general class of $f$. 
They showed in \cite{FK25} that there does not exist a nonnegative radial type II blow-up solution if $f$ satisfies some assumptions such as
\begin{equation*}
	\lim_{u \rightarrow \infty} \frac{f(u)}{f_0(u)} = 1,
\end{equation*}
where $f_0 \in C^1([0,\infty))$ is a positive function which satisfies 
\begin{align}\label{genf'Flimit}
  f_0'(u)F_0(u) \rightarrow \beta \in [1,\infty)\quad \ \text{as} \ u \rightarrow \infty,
\end{align}
under certain restrictions on $\beta$ and the space dimension $n$, and $F_0$ denotes the function defined by replacing $f$ with $f_0$ in \eqref{defF}.
In addition, Chabi proved the property corresponding to the constant self-similar blow-up profile for nonnegative radial blow-up solution to \eqref{SHE} with $f(u) = u^pL(u)$ and $f(u) = e^uL(e^u)$,
where $L$ is a function with slow variation and satisfies some additional assumptions (see \cite{C25-1,C25-2} for the details). 
Note that these assumptions to the nonlinearity include the functions such as $f(u) = u^p\log^a(K+u)$ with $p\in (1,p_s),\ K>1,\ a\in \R$ and $f(u) = e^uu^q$ with $q \geq 1$ and the study \cite{C25-1} deal with the nonradial blow-up solution.
Moreover, by using the results in \cite{C25-1,C25-2}, the more precise blow-up profile is considered in \cite{C25-2,CS25}.
The proofs in \cite{C25-1,C25-2,CS25} rely on the specific structure of the nonlinearities, thus it cannot be adapted to our super-exponential case $f(u) = e^{|u|^{p-1}u}|u|^{q-1}u$.

The main purpose of this paper is to obtain the asymptotic behavior of the blow-up solution to \eqref{SHE} with super-exponential nonlinearities.
The following is the main theorem of this paper. 

\begin{thm}\label{qssblowup}
Let $p > 1,\ q \in \{0\} \cup [1,\infty),\  n \leq 2$ and $u$ be the solution to \eqref{SHE} with $T<\infty$. Assume that the blow-up solution $u$ is of type I.
Then
\begin{equation}\label{eq:qssblowup}
	\lim_{t \rightarrow T} \frac{T-t}{F(u(y\sqrt{T-t},t))} = 1
\end{equation}
uniformly on compact sets $|y| \leq C$.
\end{thm}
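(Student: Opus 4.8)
The plan is to eliminate the super-exponential nonlinearity through the change of unknown $v = -\log F(u)$ (equivalently $F(u) = e^{-v}$), the transformation of Fujishima and Ioku specialized to the present $f$. Using $F'(u) = -1/f(u)$, a direct computation shows that $v$ solves
\begin{equation*}
  v_t = \Delta v + e^v + a(u)\,|\nabla v|^2 , \qquad a(u) := f'(u)F(u) - 1 ,
\end{equation*}
so the transformed problem is an \emph{exact} perturbation of the exponential equation $v_t = \Delta v + e^v$ studied by Liu in \cite{L89}, the only new feature being the quadratic gradient term $a(u)|\nabla v|^2$. The structural fact I would verify first is that this coefficient decays: for $f(u) = e^{u^p}u^q$ an asymptotic expansion gives $f(u)F(u) \sim (p u^{p-1})^{-1}$, hence $a(u) = O(u^{-p}) \to 0$ as $u \to \infty$. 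I would also record that type I blow-up for $u$ transfers under $v = -\log F(u)$ to the bound $v(x,t) \le -\log(T-t) + C$, i.e. type I for the exponential problem.

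Next I would pass to self-similar variables $s = -\log(T-t)$, $y = x/\sqrt{T-t}$, $w(y,s) = v(x,t) - s$, for which the statement \eqref{eq:qssblowup} becomes transparent: a direct computation yields $\frac{T-t}{F(u(y\sqrt{T-t},t))} = e^{w(y,s)}$, so \eqref{eq:qssblowup} is exactly equivalent to $w \to 0$ uniformly on $|y|\le C$. The rescaled function satisfies
\begin{equation*}
  w_s = \Delta w - \tfrac{1}{2}\,y\cdot\nabla w + e^w - 1 + a(u)\,|\nabla w|^2 ,
\end{equation*}
whose $a\equiv 0$ version is precisely Liu's rescaled equation, with stationary state $w \equiv 0$. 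The type I bound gives $w \le C$; combining this with the lower bound \eqref{belowrate} (which, using radial monotonicity, gives $w(0,s)\ge 0$) and standard parabolic regularity under the type I condition, I would establish uniform two-sided bounds for $w$ on compact sets, and in particular that $u = F^{-1}(e^{-(w+s)}) \to \infty$ there, so that $a(u) \to 0$ uniformly on $|y|\le C$.

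I would then carry out Liu's convergence scheme while treating the perturbation as a controlled error. Using the weighted energy $E[w] = \int_{\R^n}\big(\tfrac12|\nabla w|^2 - e^w + w\big)\rho\,dy$ with $\rho = e^{-|y|^2/4}$, differentiation along the flow gives $\frac{dE}{ds} = -\int w_s^2\,\rho\,dy + \int a(u)\,|\nabla w|^2 w_s\,\rho\,dy$; the objective is to show, via Cauchy–Schwarz together with the decay of $a(u)$ and gradient bounds, that the extra term is absorbable, so that $\int_0^\infty\!\!\int w_s^2\rho < \infty$ and $w_s \to 0$ along a sequence. This identifies the $\omega$-limit set with bounded solutions of $\Delta w - \tfrac12 y\cdot\nabla w + e^w - 1 = 0$, and for $n \le 2$ Liu's classification leaves only $w \equiv 0$. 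Unwinding the transformation, $w \to 0$ reads $\frac{T-t}{F(u(y\sqrt{T-t},t))} = e^{w} \to 1$, which is \eqref{eq:qssblowup}.

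The main obstacle will be the control of the gradient term $a(u)|\nabla w|^2$. The difficulty is twofold: the rescaled equation now carries a quadratic gradient nonlinearity, so a priori gradient estimates are not automatic and must be produced — either by a Bernstein-type argument for the perturbed equation, or by transferring gradient estimates from the original semilinear equation $u_t = \Delta u + f(u)$, where no gradient term is present, through the chain rule $\nabla_y w = \sqrt{T-t}\,(f(u)F(u))^{-1}\nabla_x u$. Moreover these bounds must be quantitative enough to be dominated by the decay $a(u) = O(u^{-p})$ uniformly over the relevant parabolic region. Making this perturbation of Liu's monotonicity and compactness argument rigorous — so that the extra term destroys neither the energy dissipation nor the identification of the limit — is where the bulk of the technical work lies.
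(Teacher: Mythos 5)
Your plan is the paper's plan: the same Fujishima--Ioku substitution (your $w$ is the logarithm of the paper's $v=(T-t)/F(u)$, and your rescaled equation coincides with \eqref{veq} after dividing by $v$), the same reformulation of \eqref{eq:qssblowup} as $w\to 0$, the same perturbed Liu energy, the same transfer of gradient bounds from the original equation through $\nabla_y w=\sqrt{T-t}\,(f(u)F(u))^{-1}\nabla_x u$ (this is Proposition \ref{estderivative}), and the same classification of stationary states for $n\le 2$. The step you defer to ``the bulk of the technical work'' is, however, exactly the paper's main new ingredient, so let me be concrete about what is still missing. First, the energy cannot be taken over $\R^n$ (nor over all of $\Omega(s)$): for $q\ge 1$ one has $F(0^+)=\infty$, so $v$ vanishes and $\log v=-\infty$ on $\partial\Omega(s)$, and for $q=1$ the coefficient $a(u)=f'(u)F(u)-1$ is even unbounded as $u\to 0$ (Remark \ref{f'Fbound}); the paper localizes the energy to the expanding ball $B_{s^\alpha}$ with $\alpha\in(0,\tfrac1p)$ and controls the resulting moving-boundary terms via Lemma \ref{intdifferential}. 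Second, to make the perturbation integrable in $s$ you need a quantitative lower bound for $u$ on that expanding ball, not just the qualitative statement that $a(u)\to 0$ on compact sets: the paper's Lemma \ref{esth} proves $u\ge\tfrac12 F^{-1}(T-t)$ along $|x|=(-\log(T-t))^\alpha\sqrt{T-t}$ (by contradiction, from the transferred gradient estimate together with $F^{-1}(\tau)\sim(-\log\tau)^{1/p}$), which combined with \eqref{estf'F} yields $|a(u)|\le C/s$ on all of $B_{s^\alpha}$ and hence an $O(s^{-2})$, integrable, error in the energy inequality. Without these two localization/quantification steps the dissipation integral $\int\!\!\int w_s^2\rho$ cannot be shown finite, so the argument as you have written it does not yet close; with them, it is the paper's proof.
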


\begin{remark}
  If $q = 0$, Friedman--McLeod proved in \cite{FM85} that if $u_0 \in C^2(\bar{\Omega})$ satisfies $\lap u_0 + f(u_0) \geq 0$ in $\Omega$ 
	(this implies $u_t \geq 0$ in $\Omega \times (0,T)$, see for example \cite[Proposition 52.19]{QSbook}), then the blow-up is of type I for any $n\geq 1$ and $p \geq 1$.
	We note that the facts in this remark are also true for general smooth bounded domain $\Omega$ and nonnegative initial data $u_0 \in L^\infty(\Omega)$.
\end{remark}

\begin{remark}
   When $p=1$ and $q=0$ (i.e. $f(u) = e^u$), we have $F(u) = e^{-u}$, therefore Theorem \ref{qssblowup} corresponds to \eqref{ssblowup2} for the radially symmetric blow-up solution. 
   In addition, we note that this theorem covers the nonlinearities which differ from those considered in previous works \cite{C25-1,C25-2,GK85,GK87,GK89,L89}.
\end{remark}

Let us explain the idea to prove Theorem \ref{qssblowup}. 
The difficulty in proving Theorem \ref{qssblowup} lies in the lack of self-similarity.
The main idea of the proof is transforming the equation \eqref{SHE} to the equation which is close to the one for $f(u) = e^u$.
For general nonlinearities, Fujishima considered the following transformation in \cite{F14}, which generalizes \eqref{sipower} and \eqref{siexp}, and is called quasi-scaling.
For a function $u$ and $\lambda > 0$, let us define $u_\lambda$ by 
\begin{equation}\label{qsi}
	u_\lambda(x,t) = F^{-1}(\lambda^{-2}F(u(\lambda x, \lambda^2 t))).
\end{equation}
In the exponential case $f(u) = e^u$, we have $F(u) = e^{-u}$ and $F^{-1}(u) = -\log u$, therefore \eqref{qsi} is equal to \eqref{siexp}. Therefore we can consider that this $u_\lambda$ is a generalization of \eqref{siexp}.
We can similarly see that \eqref{qsi} is equal to \eqref{sipower} if $f(u) = |u|^{p-1}u$.
If $u$ satisfies the equation $u_t -\lap u = f(u)$, then this $u_\lambda$ satisfies the following equation:
\begin{align}\label{eq:u_lambda}
  \dt u_\lambda - \Delta u_\lambda = f(u_\lambda) + \frac{|\nabla u_\lambda|^2}{F(u_\lambda)f(u_\lambda)}\Bigl(f'(u)F(u)-f'(u_\lambda)F(u_\lambda)\Bigr).
\end{align}
If $f(u) = |u|^{p-1}u$ or $f(u) = e^u$, then $f'F$ becomes the constant function, hence we obtain the scale invariance.
In \cite{F14}, the author considered the location of the blow-up set of the equation with small diffusion and general nonlinearities.
After that, Fujishima and Ioku modified the transformation \eqref{qsi} in \cite{FI18} to simplify the remainder term of \eqref{eq:u_lambda}. 
In this paper, we use the following transformation introduced in \cite{FI18}:
\begin{equation}\label{qsi2}
	u_\lambda(x,t) = -\log F(u(\lambda x, \lambda^2 t)) + 2\log \lambda.
\end{equation}
Note that we use ``$-\log$'' instead of ``$F^{-1}$'' in \eqref{qsi} and ``$-\log$'' is equal to the ``$F^{-1}$ when $f(u) = e^u$''. 
If the function $u$ satisfies the equation $u_t - \lap u = f(u)$, then the function $u_\lambda$ satisfies the following equation:
\begin{align*}%\label{eq:u_lambda2}
  \dt u_\lambda - \Delta u_\lambda = e^{u_\lambda} + |\nabla u_\lambda|^2(f'(u)F(u) - 1).
\end{align*}
In our case $f(u) = e^{|u|^{p-1}u}|u|^{q-1}u$, we can see that 
\begin{equation}\label{f'Flimit}
 f'(u)F(u) \rightarrow 1 \quad \ \text{as}\ u \rightarrow \infty
\end{equation} 
(we prove \eqref{f'Flimit} later in Lemma \ref{fproperty1}), hence it can be conjectured that the blow-up behavior of the solution to \eqref{SHE} is close to the exponential case in some sense.
Therefore we can expect that the method in \cite{L89} also works in our super-exponential case $f(u) = e^{u^p}u^q$.
However, the remainder term including $f'F-1$ does not appear in \cite{L89} due to the scale-invariance, thus we have to control this term in some way.
More precisely, we have to show that the term $f'(u(x,t))F(u(x,t))-1$ converges to $0$ rapidly enough as $(x,t)$ goes to $(0,T)$ along the curve $|x| = (-\log (T-t))^\alpha$ for some $\alpha > 0$. 
We note that the authors of \cite{FI18} considered the existence of the time-local solution of the heat equation with general nonlinearities in uniformly local Lebesgue space.
The transformation \eqref{qsi2} is also applied to the study of the existence of the time-global solution for small data in \cite{FI22,FI23} and the solution to the elliptic equation in \cite{M18}. 
As for the blow-up problems, this transformation was applied in \cite{FK25} for the case of $\beta = 1$ in \eqref{genf'Flimit}.
However, the study of the blow-up behavior of the semilinear heat equation with general nonlinearities using the transformation \eqref{qsi} or \eqref{qsi2} is limited as far as we know.
In this paper, we show a new result for asymptotics of the blow-up solution to \eqref{SHE} in Theorem \ref{qssblowup}.  \\
This paper is organized as follows. 
In Section 2, we show some properties of nonlinearity $f(u) = e^{u^p}u^q$ for $u \geq 0$ and some results which will be needed for the proof of Theorem \ref{qssblowup}.
In Section 3, we prove a lemma to control the term including $f'F-1$ and then show Theorem \ref{qssblowup}.
Throughout this paper, $C>0$ denotes a constant which may vary from line to line and we write $B_r = B_r(0)$ for $r>0$.

\section{Preliminaries}
In this section, we prove some properties of the nonlinearity $f$ and the blow-up solution $u$.

\begin{lemma}\label{fproperty1}
	If $p > 1$ and $q \in \{0\}\cup [1,\infty)$, then  there exists $l = l(p,q)> 0$ such that $f'(u)F(u) \leq 1$ for any $u \geq l$ and $f'(u)F(u) \rightarrow 1$ as $u \rightarrow \infty$.
	Moreover, there exists a constant $C = C(p,q)>0$ such that for any $u \geq l$,
	\begin{equation}\label{estf'F}
		1 - f'(u)F(u) \leq \frac{C}{pu^p + q}.
	\end{equation}
\end{lemma}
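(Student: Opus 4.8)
The plan is to express everything through the logarithmic derivative $\phi(u) := f'(u)/f(u)$, which for $f(u)=e^{u^p}u^q$ equals $\phi(u)=pu^{p-1}+qu^{-1}$, so that $f'(u)=f(u)\phi(u)$ and $\phi'(u)=p(p-1)u^{p-2}-qu^{-2}$. First I would record the exact identity
\[
 1-f'(u)F(u)=f(u)\int_u^\infty \frac{\phi(s)-\phi(u)}{f(s)}\,ds,
\]
valid for every $u>0$. It follows from two elementary facts: since $\phi(s)/f(s)=f'(s)/f(s)^2=-\frac{d}{ds}(1/f(s))$ and $1/f(s)\to0$, one has $\int_u^\infty \phi(s)/f(s)\,ds=1/f(u)$; combining this with $\phi(u)\int_u^\infty ds/f(s)=\phi(u)F(u)$ and $f(u)\phi(u)=f'(u)$ gives the identity after multiplying through by $f(u)$. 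Writing $\phi(s)-\phi(u)=\int_u^s\phi'(r)\,dr$ and exchanging the order of integration (Tonelli, justified once $u\ge l$ below, where $\phi'\ge0$) recasts it in the equivalent form $1-f'(u)F(u)=f(u)\int_u^\infty \phi'(r)F(r)\,dr$, which is the shape I use for the upper bound.

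With the first form the sign statement is immediate. I would choose $l=l(p,q)$ so that $\phi$ is nondecreasing on $[l,\infty)$: since $\phi'(u)\ge0\iff pu^p\ge q/(p-1)$, it suffices to take $l=(q/(p(p-1)))^{1/p}$ for $q>0$ and any fixed $l>0$ for $q=0$. Then $\phi(s)\ge\phi(u)$ for $s\ge u\ge l$, so the integrand above is nonnegative and $f'(u)F(u)\le1$ for $u\ge l$; equivalently $F(r)\le 1/f'(r)$ on $[l,\infty)$. The limit $f'(u)F(u)\to1$ will then follow for free from the quantitative bound.

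For \eqref{estf'F} the real work is to bound the second form from above. Using $F(r)\le 1/f'(r)=1/(f(r)\phi(r))$ together with $\phi(r)\ge pr^{p-1}$ and $\phi'(r)\le p(p-1)r^{p-2}$ gives the pointwise estimate $\phi'(r)F(r)\le (p-1)/(rf(r))=(p-1)e^{-r^p}r^{-q-1}$, whence
\[
 1-f'(u)F(u)\le (p-1)\,f(u)\int_u^\infty e^{-r^p}r^{-q-1}\,dr.
\]
The crux is the single Laplace-type integral $\int_u^\infty e^{-r^p}r^{-q-1}\,dr$, which I would control through the identity $e^{-r^p}=-\frac{1}{pr^{p-1}}\frac{d}{dr}e^{-r^p}$: one integration by parts yields the boundary term $u^{-q-p}e^{-u^p}/p$ minus a manifestly nonnegative integral, so $\int_u^\infty e^{-r^p}r^{-q-1}\,dr\le u^{-q-p}e^{-u^p}/p$. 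Multiplying by $f(u)=e^{u^p}u^q$ collapses this to $1/(pu^p)$, giving $1-f'(u)F(u)\le (p-1)/(pu^p)$. To pass from $pu^p$ to $pu^p+q$ I invoke $u\ge l$, for which $q/(pu^p)\le q/(pl^p)\le p-1$, hence $(p-1)/(pu^p)\le \frac{p-1}{pu^p+q}\,(1+q/(pu^p))\le \frac{p(p-1)}{pu^p+q}$, which is \eqref{estf'F} with $C=p(p-1)$ (and $C=p-1$ when $q=0$). I expect the integration-by-parts estimate of the Laplace integral to be the only genuinely delicate step, since its constant must be sharp enough to survive multiplication by the $(p-1)$ prefactor and the replacement of $pu^p$ by $pu^p+q$.
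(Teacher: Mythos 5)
Your proof is correct; every step checks out, including the exact identity $1-f'(u)F(u)=f(u)\int_u^\infty\frac{\phi(s)-\phi(u)}{f(s)}\,ds$, the Fubini rewriting as $f(u)\int_u^\infty\phi'(r)F(r)\,dr$, the Laplace-integral bound $\int_u^\infty e^{-r^p}r^{-q-1}\,dr\le \frac{1}{p}u^{-q-p}e^{-u^p}$, and the final passage from $pu^p$ to $pu^p+q$ using $u\ge l=(q/(p(p-1)))^{1/p}$. The first assertion ($f'F\le 1$ for $u\ge l$) is proved exactly as in the paper: both arguments amount to the monotonicity of $\phi=f'/f$ on $[l,\infty)$ combined with $\int_u^\infty \phi(s)/f(s)\,ds=1/f(u)$. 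For the quantitative estimate \eqref{estf'F}, however, you take a genuinely different route. The paper integrates $F$ by parts against $1/\phi$ to get $1-f'(u)F(u)=f'(u)\int_u^\infty\frac{p(p-1)s^p-q}{(ps^p+q)^2}e^{-s^p}s^{-q}\,ds$, then pulls the rational factor out of the integral using its eventual monotone decrease (an auxiliary check it leaves to the reader) and closes the loop with $f'F\le 1$; this yields \eqref{estf'F} with an unspecified constant. You instead exchange the order of integration, insert the already-established bound $F(r)\le 1/f'(r)$ together with the crude inequalities $\phi(r)\ge pr^{p-1}$ and $\phi'(r)\le p(p-1)r^{p-2}$, and finish by one explicit integration by parts on the resulting Laplace integral. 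Your version avoids the monotonicity verification of $\frac{p(p-1)s^p-q}{(ps^p+q)^2}$ and produces the explicit constant $C=p(p-1)$, at the cost of the (easy but slightly delicate, as you note) sharp estimate of $\int_u^\infty e^{-r^p}r^{-q-1}\,dr$. A further difference is that you obtain the limit $f'(u)F(u)\to 1$ as an immediate corollary of the sign plus the quantitative bound, whereas the paper proves it separately via l'H\^{o}pital's rule in \eqref{f'Flimitcalc}; your ordering is more economical, and the paper's l'H\^{o}pital computation is only genuinely needed elsewhere (Remark \ref{f'Fbound}, the behavior near $u=0$), not for this lemma.
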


\begin{proof}
 Since $f'(u) = (pu^{p-1} + qu^{-1})e^{u^p}u^q$ and $pu^{p-1} + qu^{-1}$ is nondecreasing for $ u \geq \left(\frac{q}{p(p-1)}\right)^{1/p}$, we have by direct computation that
	\begin{align*}
		f'(u)F(u) = (pu^{p-1} + qu^{-1})e^{u^p}u^q\int_{u}^{\infty} e^{-s^p}s^{-q}\, ds 
																															&\leq e^{u^p}u^q \int_{u}^{\infty} (ps^{p-1} + qs^{-1})e^{-s^p}s^{-q}\, ds\\
																															&= e^{u^p}u^q \int_{u}^{\infty} -\left(e^{-s^p}s^{-q}\right)'\, ds \\
																															&= 1
	\end{align*}
	for $ u \geq \left(\frac{q}{p(p-1)}\right)^{1/p}$. Next, by using l'H\^{o}pital's rule, we have
	\begin{equation}\label{f'Flimitcalc}
    \begin{split}
      		\lim_{u\rightarrow \infty} f'(u)F(u) &= \lim_{u\rightarrow \infty} \frac{F(u)}{\frac{1}{f'(u)}}\\
																				 &= \lim_{u\rightarrow \infty} \frac{f'(u)^2}{f(u)f''(u)}\\
																				 &= \lim_{u\rightarrow \infty} \frac{(pu^{p-1} + qu^{-1})^2(e^{u^p}u^q)^2}{\{p(p-1)u^{p-2} - qu^{-2} + (pu^{p-1} + qu^{-1})^2\}(e^{u^p}u^q)^2}\\
																				 &= 1.
    \end{split}
	\end{equation}
 Finally, we show the inequality \eqref{estf'F}. By using the integration by parts, we have
 \begin{equation}
	\begin{split}\label{Fcalc}
			F(u) = \int_{u}^{\infty} e^{-s^p}s^{-q}\, ds &= \int_{u}^{\infty}\frac{ps^{p-1} + qs^{-1}}{ps^{p-1} + qs^{-1}} e^{-s^p}s^{-q}\, ds  \\
																				 &= \left[-\frac{1}{(ps^{p-1} + qs^{-1})e^{s^p}s^q}\right]_{u}^{\infty} + \int_{u}^{\infty} \left(\frac{1}{ps^{p-1} + qs^{-1}}\right)'e^{-s^p}s^{-q}\, ds\\
																				 &= \frac{1}{f'(u)} + \int_{u}^{\infty} \left(\frac{p(1-p)s^p + q}{(ps^{p} + q)^2}\right)e^{-s^p}s^{-q}\, ds.\\
	\end{split}
 \end{equation}
 By multiplying \eqref{Fcalc} by $f'(u)$, it follows that 
 \begin{align*}
	1 - f'(u)F(u) &= (pu^{p-1} + qu^{-1})e^{u^p}u^q\int_{u}^{\infty} \left(\frac{p(p-1)s^p - q}{(ps^{p} + q)^2}\right)e^{-s^p}s^{-q}\, ds.
 \end{align*}
 It can be easily checked that there exists $l_0 > 0$ such that $ \frac{p(p-1)s^p - q}{(ps^{p} + q)^2} $ is monotonically decreasing for $s \geq l_0$.
 Therefore if $ u \geq l_0$, then we have
 \begin{align*}
	1 - f'(u)F(u) \leq \frac{p(p-1)u^p - q}{pu^p+q}\cdot \frac{(pu^{p-1} + qu^{-1})f(u)F(u)}{pu^p+q}.
 \end{align*}
 Since $(pu^{p-1} + qu^{-1})f(u)F(u) = f'(u)F(u) \leq 1$ for $u \geq \left(\frac{q}{p(p-1)}\right)^{1/p}$ and $ \frac{p(p-1)u^p - q}{pu^p+q}$ is bounded in $[l_0, \infty)$, 
 \eqref{estf'F} holds for $u \geq \max \left\{l_0, \left(\frac{q}{p(p-1)}\right)^{1/p}\right\} \eqcolon l$.
 The proof is completed.
\end{proof}

\begin{remark}\label{f'Fbound}
  We note that when $q = 0$, we can take $l = 0$ and the function $f'F$ is bounded in $(0,\infty)$ because $f'(u)F(u) \rightarrow 0$ as $u \rightarrow 0$.
  If $q > 1$, then since $f'(u) \rightarrow 0$ and $F(u) \rightarrow \infty$ as $u \rightarrow 0$, the calculation \eqref{f'Flimitcalc} also holds as $u \rightarrow 0$ and we have 
  \begin{equation*}
    \lim_{u\rightarrow 0}  f'(u)F(u) = \frac{q}{q-1},
  \end{equation*} 
  therefore the function $f'F$ is also bounded in $(0,\infty)$ for the case of $q > 1$.
  This limit diverges to infinity when $q = 1$, thus we cannot obtain the boundedness of $f'F$ near $u = 0$.
\end{remark}

The following lemma provides the asymptotics of $F^{-1}(u)$ as $u$ goes to zero.
\begin{lemma}\label{fproperty2}
	If $f(u) = e^{u^p}u^q\,(p > 1,\ q \in \{0\}\cup [1,\infty))$, then
	\begin{equation}\label{asympF-1}
		\lim_{u \rightarrow 0+} \frac{F^{-1}(u)}{(-\log u)^{\frac{1}{p}}} = 1.
	\end{equation}
\end{lemma}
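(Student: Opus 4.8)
The plan is to pass to the variable $v = F^{-1}(u)$ and reformulate the limit as an asymptotic statement about $F$ near infinity. Since $F$ is positive, strictly decreasing and satisfies $F(v) \to 0$ as $v \to \infty$, writing $u = F(v)$ turns the limit $u \to 0+$ into $v \to \infty$, and \eqref{asympF-1} becomes the assertion that $v/(-\log F(v))^{1/p} \to 1$. Because $x \mapsto x^{1/p}$ is continuous, this is equivalent to
\begin{equation*}
	\lim_{v \to \infty} \frac{-\log F(v)}{v^p} = 1,
\end{equation*}
so it suffices to establish this logarithmic asymptotic.

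To prove it, I would apply l'H\^{o}pital's rule to this $\infty/\infty$ indeterminate form. Using $F'(v) = -1/f(v) = -e^{-v^p}v^{-q}$, the quotient of derivatives is
\begin{equation*}
	\frac{-F'(v)/F(v)}{p v^{p-1}} = \frac{1}{p v^{p-1}\, f(v)F(v)},
\end{equation*}
so everything reduces to the asymptotics of $f(v)F(v)$ as $v \to \infty$. Here I would use the previous lemma: from $f(u) = e^{u^p}u^q$ one has $f'(u) = (p u^{p-1} + q u^{-1})f(u)$, whence $f'(u)F(u) = (p u^{p-1} + q u^{-1})f(u)F(u)$; since Lemma \ref{fproperty1} gives $f'(u)F(u) \to 1$, it follows that $f(v)F(v) \sim (p v^{p-1} + q v^{-1})^{-1} \sim (p v^{p-1})^{-1}$. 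Substituting this into the quotient of derivatives shows it tends to $1$, and l'H\^{o}pital then yields the desired logarithmic asymptotic.

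Since the essential input $f'F \to 1$ is already provided by Lemma \ref{fproperty1}, I do not expect a genuine obstacle; the lemma is in effect a corollary of the preceding one. The only points needing care are the verification of the l'H\^{o}pital hypotheses (both $-\log F(v)$ and $v^p$ diverge, and the ratio of derivatives converges) and the passage from the logarithmic asymptotic back to \eqref{asympF-1}. As an alternative route, one could instead establish the sharper tail estimate $F(v) \sim e^{-v^p}/(p v^{p+q-1})$ directly---either by the substitution $t = s^p$ identifying $F(v)$ with an incomplete Gamma function, or by a single integration by parts---and then take logarithms; this also makes the lower-order $\log v$ terms explicit and confirms that they are negligible against $v^p$.
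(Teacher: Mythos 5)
Your argument is correct and takes essentially the same route as the paper: both reduce the lemma to the asymptotic $-\log F(v)\sim v^p$ as $v\to\infty$, with Lemma \ref{fproperty1} ($f'F\to1$) as the sole input, and then transfer this to $F^{-1}$. The only cosmetic differences are that you obtain the logarithmic asymptotic via l'H\^{o}pital (the paper instead takes logarithms of $f'F\to1$ and divides by $u^p$) and that you pass to $F^{-1}$ by the substitution $u=F(v)$ rather than by inverting two-sided monotone bounds on $F$.
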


\begin{proof}
	By Lemma \ref{fproperty1}, $\log (pu^{p-1+q}+qu^{q-1}) + u^p + \log F(u) \rightarrow 0$ as $u \rightarrow \infty$. Since $\frac{\log (pu^{p-1+q}+qu^{q-1})}{u^p} \rightarrow 0$ as $u \rightarrow \infty$, we have
 \begin{equation*}
	\lim_{u \rightarrow \infty} \frac{-\log F(u)}{u^p} = 1.
 \end{equation*}
 Therefore, for any $\ep \in (0,1)$, there exists $M>0$ such that if $u > M$, then
 \begin{equation*}
	(1-\ep)u^p < -\log F(u) < (1+\ep)u^p,
 \end{equation*}
which is equivalent to
\begin{equation}\label{Festimate}
	e^{-(1+\ep)u^p} < F(u) < e^{-(1-\ep)u^p}.
\end{equation}
In general, if two decreasing functions $f$ and $g$ satisfy $f \leq g$ on an interval $I$, then $f^{-1} \leq g^{-1}$ on $f(I) \cap g(I)$. Hence we obtain from \eqref{Festimate} that
\begin{equation*}
	\left(\frac{1}{1+\ep}\right)^{\frac{1}{p}}\left(-\log u\right)^{\frac{1}{p}} < F^{-1}(u) < \left(\frac{1}{1-\ep}\right)^{\frac{1}{p}}\left(-\log u\right)^{\frac{1}{p}}
\end{equation*}
for sufficiently small $u$. This implies \eqref{asympF-1}.
\end{proof}

\begin{remark}
	The proof here also holds for the case $p \in (0,1]$ since $f'(u)F(u) \rightarrow 1$ as $u \rightarrow \infty$ remains true by the calculation \eqref{f'Flimitcalc}.
\end{remark}
The following proposition provides a derivative estimate for the type I blow-up solution $u$. 
\begin{prop}\label{estderivative}
	Let $u$ be the type I blow-up solution to \eqref{SHE} with $T < \infty$. Then for any $0 < r < \tilde{R} \eqcolon \min\{R, \sqrt{T}\}$, we have
	\begin{equation}\label{eq:estderivative}
		\frac{|\nabla u|}{f(u)F(u)} \leq C(T-t)^{-\frac{1}{2}}, \  \frac{|\nabla^2u|}{f(u)F(u)}\leq C(T-t)^{-1} \ \text{in}\  B_r\times [T-r^2,T).
	\end{equation}
\end{prop}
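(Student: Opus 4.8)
The plan is to prove both bounds simultaneously by a rescaling argument based on the quasi-scaling \eqref{qsi2}, reducing the two estimates to a single uniform interior $C^2$-bound for the transformed function on a parabolic cylinder of fixed size. Writing $w = -\log F(u)$ so that $u_\lambda(x,t) = w(\lambda x,\lambda^2 t)+2\log\lambda$, and using $F'=-1/f$, a direct differentiation gives the scaling identities
\begin{equation*}
  \nabla u_\lambda(x,t) = \lambda\,\frac{\nabla u}{f(u)F(u)}(\lambda x,\lambda^2 t), \qquad \frac{\nabla^2 u}{f(u)F(u)} = \nabla^2 w + \bigl(f'(u)F(u)-1\bigr)\,\nabla w\otimes\nabla w .
\end{equation*}
Thus, once one shows $\|u_\lambda\|_{C^1}+\|\nabla^2 u_\lambda\|_{L^\infty}\le C$ on a fixed cylinder independently of $\lambda$, choosing $\lambda=\sqrt{T-t_0}$ and evaluating at the rescaled centre $(x_0/\lambda,t_0/\lambda^2)$ immediately yields $|\nabla u|/(fF)\le C\lambda^{-1}=C(T-t_0)^{-1/2}$ at $(x_0,t_0)$; the second identity, combined with $0<f'F\le1$ (Lemma \ref{fproperty1}) and the gradient bound, then gives $|\nabla^2 u|/(fF)\le C\lambda^{-2}=C(T-t_0)^{-1}$. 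Hence everything reduces to a uniform interior $C^2$-estimate for $u_\lambda$.

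Next I would set up this interior estimate using that $u_\lambda$ solves
\begin{equation*}
  \dt u_\lambda - \lap u_\lambda = e^{u_\lambda} + \bigl(f'(u)F(u)-1\bigr)|\nabla u_\lambda|^2 .
\end{equation*}
The type I hypothesis \eqref{type1}, i.e. $F(u)\ge c(T-t)$, gives a uniform \emph{upper} bound $u_\lambda\le C$ (equivalently $e^{u_\lambda}\le C$) on the cylinder, since with $\lambda=\sqrt{T-t_0}$ the quantity $(T-\lambda^2 s)/\lambda^2$ stays in a bounded range on the rescaled time interval. With $u_\lambda$ bounded above, the zeroth-order term $e^{u_\lambda}$ is bounded and, by Lemma \ref{fproperty1}, the coefficient $f'(u)F(u)-1$ of the quadratic gradient term is nonpositive and tends to $0$ near the blow-up point. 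I would then run a Bernstein-type argument on $P=|\nabla u_\lambda|^2$: after differentiating the equation, the nonpositivity and smallness of $f'F-1$, together with the corresponding control on $(f'F)'$ (which follows from the same asymptotics), allow the dangerous quadratic-in-$P$ contributions to be absorbed by the good term $-2|\nabla^2 u_\lambda|^2$ via a standard cutoff, yielding $|\nabla u_\lambda|\le C$. Once the gradient is controlled the right-hand side lies in $L^\infty$, so interior parabolic $L^p$ (or Schauder) estimates upgrade this to $\|\nabla^2 u_\lambda\|_{L^\infty}\le C$, closing the reduction. At points where $u$ remains bounded up to $T$, I would instead invoke standard interior regularity for $u$ directly: there the left-hand sides are bounded while the right-hand sides are bounded below on $[T-r^2,T)$, so the inequalities hold trivially.

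The main obstacle is securing the uniform \emph{two-sided} bound on $u_\lambda$, specifically the \emph{lower} bound. The upper bound is immediate from type I, but a lower bound on $u_\lambda$ is equivalent to a matching lower bound $u(x,t)\gtrsim F^{-1}(T-t)$ throughout the parabolic neighbourhood $|x-x_0|\lesssim\sqrt{T-t_0}$, i.e. to the statement that near the blow-up point the solution stays comparable to the ODE blow-up rate on the natural parabolic scale. For radially nonincreasing solutions this is plausible from the monotone profile through comparison with $u(0,t)=\|u(t)\|_{L^\infty}$, but it does not follow from type I alone and must be established separately (or absorbed into a rescaling--compactness version of the argument). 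This is exactly the place where the absence of scale invariance bites: in Liu's setting $f(u)=e^u$ the transformed equation carries no gradient term and the rescaled solution is controlled directly, whereas here one must simultaneously tame the quadratic gradient nonlinearity and supply the lower bound that the scaling does not hand over for free.
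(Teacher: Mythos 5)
Your overall strategy coincides with the paper's: rescale by the quasi-scaling, use type I to bound the rescaled function on a fixed cylinder, obtain a uniform interior $C^2$ estimate there, and scale back. Your two scaling identities are correct, and the paper does exactly this reduction, except that it works with $w=\lambda^2/F(u)=e^{u_\lambda}$ rather than with $u_\lambda$ itself, so that the transformed equation becomes $w_\tau=\Delta w+w^2+\frac{|\nabla w|^2}{w}\left(f'(u)F(u)-2\right)$ with $w$ bounded above by type I, and the interior bound follows from the parabolic regularity theorem rather than from a Bernstein argument. However, the obstacle you single out as the main one is not an obstacle at all. A lower bound on $u_\lambda$, i.e.\ $u\gtrsim F^{-1}(T-t)$ on the parabolic neighbourhood, is needed nowhere: the source term $e^{u_\lambda}$ only requires the upper bound, and the coefficient $f'(u)F(u)-1$ only needs to be \emph{bounded}, which holds as soon as $u$ stays away from $0$ --- automatic for $q\in\{0\}\cup(1,\infty)$ by Remark \ref{f'Fbound}, and for $q=1$ supplied by the strong maximum principle on compact subsets of $\Omega\times(0,T)$, which is how the paper handles it. Worse, a parabolic-scale lower bound of ODE type is essentially the content of Lemma \ref{esth} and of the main theorem, both of whose proofs \emph{use} \eqref{eq:estderivative}; building it into the proof of this proposition would be circular.

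The genuine gap in your proposal is the Bernstein step itself. You propose to absorb the dangerous contributions using the ``smallness'' of $f'(u)F(u)-1$ near the blow-up point, but that smallness along the relevant space--time region is precisely what is established later (via Lemma \ref{esth}) \emph{from} this proposition, so you may only assume the coefficient is bounded. With a merely bounded coefficient $a=f'(u)F(u)-1$, differentiating the term $a|\nabla u_\lambda|^2$ in the equation for $P=|\nabla u_\lambda|^2$ produces, after Cauchy--Schwarz, a quartic term of order $a^2P^2$ that cannot be absorbed by the single good term $-2|\nabla^2 u_\lambda|^2$, so the cutoff argument as described does not close; one needs either an exponential substitution (which is effectively what passing to $w=e^{u_\lambda}$ and invoking interior parabolic regularity for \eqref{weq} accomplishes in the paper) or some additional structure. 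You should also note that the nonpositivity of $f'F-1$ from Lemma \ref{fproperty1} only holds for $u\geq l$, not globally, so it cannot be used as a sign condition throughout the cylinder. If you replace the Bernstein step by the paper's route --- bound $w$, bound $f'F-2$ and $|\nabla w|/w$ on the fixed compact set, and apply parabolic regularity --- the rest of your reduction goes through.
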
 

The proof of this proposition is essentially based on \cite[Proposition 1]{GK85}, but due to the lack of scale invariance, the proof here requires a slight modification.

\begin{proof}
	We may assume that $\frac{\tilde{R}}{2} < r < \tilde{R}$. Set $D \coloneq B_r \times [T-r^2, T-\frac{1}{2}(\tilde{R}-r)^2]$, then $\overline{D}$ is a compact set of $\Omega \times (0,T)$. 
	Since the solution $u$ is bounded on $\overline{D}$, the parabolic regularity theorem implies $u \in C^{2+\alpha,1+\frac{\alpha}{2}}(\overline{D})$ for some $\alpha \in (0,1)$. In particular, we obtain that for some constant $C>0$,
	\begin{equation*}
		|\nabla u| + |\nabla^2 u| \leq C \quad\ \text{for}\  (x,t) \in D.
	\end{equation*}
	When $(x,t) \in B_r \times (T-\frac{1}{2}(\tilde{R}-r)^2,T)$, fix such $(x,t)$ and let 
	\begin{equation*}
		 \lambda = \left(\frac{2}{\tilde{R}^2}(T-t)\right)^{\frac{1}{2}}.
	\end{equation*}
	Define a function $w$ by 
	\begin{equation*}%\label{defw}
		w(z,\tau) \coloneq \frac{\lambda^2}{F(u(x+ \lambda z, (1-\lambda^2)T + \lambda^2 \tau))}.
	\end{equation*}
	Then the domain of $w(z,\tau)$ includes $B_{\tilde{R}} \times [T-\tilde{R}^2,T)$. Indeed, we can see that $2(T-t) < (\tilde{R}-r)^2$, 
	therefore we have
	\begin{equation*}
		|x + \lambda z| \leq |x|+\left(\frac{2}{\tilde{R}^2}(T-t)\right)^{\frac{1}{2}} \tilde{R} < r + (\tilde{R} - r) = \tilde{R} \quad\ \text{for}\ z \in B_{\tilde{R}}.
	\end{equation*} 
	Moreover, if $\tau \in [T-\tilde{R}^2,T)$, then $(1-\lambda^2)T + \lambda^2 \tau < T$ and 
	\begin{align*}
		(1-\lambda^2)T + \lambda^2 \tau &\geq \left(1- \frac{2}{\tilde{R}^2}(T-t)\right)T + \frac{2}{\tilde{R}^2}(T-t)(T-\tilde{R}^2)\\
																		&= T - 2(T-t)\\
																		&> T-(\tilde{R}-r)^2\\
																		&> T-\tilde{R}^2.
	\end{align*}
  By direct calculations, we have
  \begin{align}
    w_\tau = \lambda^2 w \frac{u_t}{f(u)F(u)},\notag\\
    \nabla_z w = \lambda w \frac{\nabla_xu}{f(u)F(u)} \label{wderiv}
  \end{align}
  and 
  \begin{equation*}
	 \nabla_z^2 w = \lambda^2 w \left(\frac{\partial_{x_ix_j}u}{f(u)F(u)} - \frac{(f'(u)F(u)-2)\partial_{x_i}u\,\partial_{x_j}u}{f^2(u)F^2(u)}\right)_{1\leq i,j \leq n}.
  \end{equation*}
	Therefore the function $w$ satisfies the equation
	\begin{equation}\label{weq}
		w_{\tau} = \lap w + w^2 + \frac{|\nabla w|^2}{w}\left(f'(u)F(u) - 2\right).
	\end{equation}
	Since the blow-up solution $u$ is of type I, it follows from \eqref{type1} and the decreasing property of $F$ that
	\begin{equation*}%\label{wtype1}
		\begin{split}
			w(z,\tau) &= \frac{\lambda^2}{F(u(x+ \lambda z, (1-\lambda^2)T + \lambda^2 \tau))}\\
								&\leq \frac{\lambda^2}{c((T - (1-\lambda^2)T - \lambda^2 \tau))}\\
								&= \frac{1}{c(T - \tau)}.
		\end{split}
	\end{equation*}
	Set $K \coloneq \overline{B_{\frac{\tilde{R}}{2}}} \times [T - \frac{3\tilde{R}^2}{4}, T-\frac{\tilde{R}^2}{4}]$, then $K$ is a compact set of $B_{\tilde{R}} \times [T-\tilde{R}^2,T)$ and 
	\begin{equation}\label{west}
		0 \leq w(z,\tau) \leq \frac{4}{c\tilde{R}^2} \quad \ \text{in}\ K.
	\end{equation}
	In order to use the parabolic regularity theorem to \eqref{weq}, we need to show the boundedness of $|\nabla w|/w$ (and $f'(u(x+ \lambda z, (1-\lambda^2)T + \lambda^2 \tau))F(u(x+ \lambda z, (1-\lambda^2)T + \lambda^2 \tau)) - 2$ when $q = 1$) in $K$.
  We note that when $q \in \{0\} \cup (1,\infty)$, the boundedness of $f'F-2$ is already obtained in Remark \ref{f'Fbound}.\\
	At first, we check the boundedness of $f'(u(x+ \lambda z, (1-\lambda^2)T + \lambda^2 \tau))F(u(x+ \lambda z, (1-\lambda^2)T + \lambda^2 \tau)) - 2$ when $q = 1$.
	By the strong maximum principle, the solution $u$ is positive on compact sets of $B_{\tilde{R}} \times [T - \tilde{R}^2, T)$, thus we may assume that there exists a constant $\dl > 0$ such that $u(x+\lambda z, (1-\lambda^2)T + \lambda^2 \tau) \geq \dl$ for $(z,\tau) \in K$.
	By Lemma \ref{fproperty1} and Remark \ref{f'Fbound}, we get the boundedness of $f'(u)F(u) - 2$ in $K$ for any $q \in \{0\} \cup [1,\infty)$.\\
  Next, we see the boundedness of $|\nabla w|/w$.
	Since $f(s)F(s) > 0$ for any $s>0$ and the solution $u(x+ \lambda z, (1-\lambda^2)T + \lambda^2 \tau)$ is bounded and positive in $K$, we get
	\begin{align*}
		f(u)F(u) \geq m \quad\ \text{in}\ K
	\end{align*}
	for some $m > 0$. 
	In addition to this, \eqref{wderiv}, \eqref{west} and the boundedness of $|\nabla u|$ on compact sets of $B_{\tilde{R}} \times [T-\tilde{R}^2, T)$ imply that $|\nabla w|$ is also bounded in $K$.
	Moreover, since $u(x+\lambda z, (1-\lambda^2)T + \lambda^2 \tau) \geq \dl$ in $K$, we have
	\begin{align*}
			w(z,\tau) &= \frac{\lambda^2}{F(u(x+ \lambda z, (1-\lambda^2)T + \lambda^2 \tau))}\\
								&\geq \frac{\lambda^2}{F(\dl)} > 0,
	\end{align*}
	hence we obtain the boundedness of $|\nabla w|/w$ in $K$.\\
	Therefore we can also apply the parabolic regularity theorem to \eqref{weq} and derive
 \begin{equation}\label{wderivest}
	|\nabla w| + |\nabla^2 w| <C \quad\ \text{in}\ K 
 \end{equation}
 for some constant $C>0$.
 We note that the constant $C$ in \eqref{wderivest} can be taken independently of $(x,t)$ since the choice of the compact set $K$ and the bound \eqref{west} are not depend on $(x,t)$.
 Here, when $(z,\tau) = (0, T-\frac{\tilde{R}^2}{2}) \in K$, we have $x + \lambda z = x$ and 
 \begin{align*}
  (1-\lambda^2)T + \lambda^2 \tau &= (1-\lambda^2)T + \lambda^2 \left(T - \frac{\tilde{R}^2}{2}\right)\\
                                  &= T - (T-t)\\
                                  &= t.
 \end{align*}
 Hence by using the boundedness of $f'F-2$ and the calculations of $\nabla w$ and $\nabla^2 w$, we obtain \eqref{eq:estderivative}.
\end{proof}

%\begin{remark}
%	The transformation \eqref{defw} was first introduced in \cite{FI18}. 
%	In that paper, the authors discussed the existence of the time local solution of the heat equation with more general nonlinearities in uniformly local Lebesgue spaces.
%\end{remark}

Next, we consider a transformation which will be used in the proof of the main theorem. 
\begin{prop}\label{qsstrans}
	Assume that a postive function u satisfies $u_t - \lap u = f(u)$ in $\Omega \times (0,T)$. Let $a \in \Omega$ and $v$ be a function defined by 
	\begin{equation}\label{vdef}
		v(y,s) = \frac{T-t}{F(u(x,t))},\quad\ y = \frac{x-a}{\sqrt{T-t}},\quad\   s = -\log (T-t).
	\end{equation}
	Then $v$ satisfies the following equation in $\Omega(s) \times (-\log T, \infty)$:
	\begin{equation}\label{veq}
		v_s = \Delta v -\frac{y}{2} \cdot \nabla v - \frac{|\nabla v|^2}{v} + v^2 - v + \frac{|\nabla v|^2}{v}\Bigl(f'(u)F(u) - 1\Bigr), 
	\end{equation}
	where $\Omega(s) = \{y \in \R^n \mid ye^{-\frac{s}{2}} \in \Omega \}$.
\end{prop}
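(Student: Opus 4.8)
The plan is to carry out the self-similar change of variables directly by the chain rule, using the hypothesis $u_t - \lap u = f(u)$ to eliminate the time derivative at the end. The two building blocks are the inverse substitution $x = a + y e^{-s/2}$, $t = T - e^{-s}$, from which $\partial_{y_j} x_i = e^{-s/2}\delta_{ij}$, $\partial_s x_i = -\tfrac12 y_i e^{-s/2}$, $\partial_s t = e^{-s}$ and $\partial_{y_j} t = 0$; and the algebraic identity $F'(u) = -1/f(u)$ coming straight from the definition \eqref{defF}. The time range $s \in (-\log T, \infty)$ is exactly the image of $t \in (0,T)$ under $s = -\log(T-t)$, while $y \in \Omega(s)$ is precisely the requirement that the argument $x$ lie in $\Omega$.

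First I would write $v = e^{-s}\Phi(u)$ with $\Phi(u) = 1/F(u)$, and record, using $F' = -1/f$, the derivatives $\Phi'(u) = 1/(f(u)F(u)^2)$ and $\Phi''(u) = (2 - f'(u)F(u))/(f(u)^2 F(u)^3)$. A first application of the chain rule in the spatial variable gives $\nabla_y v = e^{-3s/2}\Phi'(u)\,\nabla_x u$, and differentiating once more,
\[
  \lap_y v = e^{-2s}\bigl(\Phi''(u)\,|\nabla_x u|^2 + \Phi'(u)\,\lap_x u\bigr).
\]
For the time derivative I would compute $\partial_s v = -e^{-s}\Phi(u) + e^{-s}\Phi'(u)\,\partial_s u$ and expand
\[
  \partial_s u = -\tfrac12 e^{-s/2}\, y\cdot\nabla_x u + e^{-s} u_t, \qquad u_t = \lap_x u + f(u).
\]

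The remaining work is bookkeeping. I would recognize the first-order term $e^{-s}\Phi'(u)\cdot\bigl(-\tfrac12 e^{-s/2} y\cdot\nabla_x u\bigr)$ as precisely $-\tfrac{y}{2}\cdot\nabla_y v$, use the Laplacian identity to replace $e^{-2s}\Phi'(u)\lap_x u$ by $\lap_y v - e^{-2s}\Phi''(u)|\nabla_x u|^2$, and then convert every coefficient back into $v$ through the three relations $e^{-s}/F(u) = v$, hence $e^{s}F(u) = 1/v$, and $\nabla_x u = e^{3s/2} f(u)F(u)^2\,\nabla_y v$. This turns $-e^{-s}\Phi(u)$ into $-v$, turns $e^{-2s}\Phi'(u)f(u)$ into $+v^2$, and turns $-e^{-2s}\Phi''(u)|\nabla_x u|^2$ into $\tfrac{|\nabla v|^2}{v}\bigl(f'(u)F(u)-2\bigr)$. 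Finally I would split $f'F - 2 = (f'F - 1) - 1$ to rewrite this last contribution as $-\tfrac{|\nabla v|^2}{v} + \tfrac{|\nabla v|^2}{v}\bigl(f'(u)F(u)-1\bigr)$, which reproduces exactly \eqref{veq}.

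There is no conceptual obstacle; the computation is mechanical once the substitution relations are in place. The only genuine care is in the second-order step: keeping track of the signs in $\Phi''$ (where $F' = -1/f$ forces $2fFF' = -2F$) and confirming that the $\Phi''$ term alone carries the full gradient nonlinearity, so that the coefficient emerges as $f'F - 2$ rather than $f'F$ or $-2$ in isolation. Collecting the exponential factors $e^{-s}$, $e^{-3s/2}$, $e^{-2s}$ and checking that they cancel against the powers of $F$ to leave clean powers of $v$ is the step most prone to arithmetic slips, so I would verify it through the consistency identity $|\nabla_y v|/v = e^{-s/2}\,|\nabla_x u|/(f(u)F(u))$, which simultaneously connects the transformation to the derivative bound of Proposition \ref{estderivative}.
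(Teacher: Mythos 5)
Your proposal is correct: all the derivative formulas ($\Phi'$, $\Phi''$, the chain-rule factors $e^{-s/2}$, $e^{-s}$) check out, and the bookkeeping does produce exactly \eqref{veq}. This is essentially the same argument as the paper's, just run in the opposite direction — the paper writes $u = F^{-1}\bigl(\tfrac{T-t}{v}\bigr)$ and computes $u_t$, $\nabla_x u$, $\lap_x u$ in terms of $v$ before substituting into $u_t - \lap u = f(u)$ and multiplying by $v^2/f(u)$, whereas you differentiate $v = e^{-s}/F(u)$ directly; the two are the same chain-rule computation.
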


\begin{remark}
  In \cite{L89}, this transformation was used for the case of $f(u) = e^u$. 
  Actually, in the case of $f(u) = e^u$, the function $v$ in \eqref{vdef} is equal to the function in the limit \eqref{ssblowup2} since $F(u) = e^{-u}$ when $f(u) = e^u$ as we already mentioned in Section 1.
	It can be considered that the transformation \eqref{vdef} is derived from \eqref{qsi2} by letting $\lambda = (T-t)^{-\frac{1}{2}}$ and applying exponential function to both sides of \eqref{qsi2}.
\end{remark}
\begin{proof}
	 Let $v$ be a function defined by \eqref{vdef}. Then $ u(x,t) = F^{-1}\left(\frac{T-t}{v(y,s)}\right)$. It follows from the direct computation that
 \begin{align*}
	u_t &= f(u)\left[\frac{1}{v} + \frac{T-t}{v^2}\left(\frac{v_s}{T-t} + \frac{y}{2(T-t)}\cdot \nabla_y v\right) \right],\\
	\nabla_x u &= f(u)\frac{\nabla_y v}{v^2}\sqrt{T-t},\\
	\lap_x u &= f'(u)\frac{\nabla_x u \cdot \nabla_y v}{v^2}\sqrt{T-t} + f(u)\left(\frac{\lap_y v}{v^2} - 2\frac{|\nabla_y v|^2}{v^3}\right)\\
				 &= f'(u)f(u)F(u) \frac{|\nabla_y v|^2}{v^3} + f(u)\left(\frac{\lap_y v}{v^2} - 2\frac{|\nabla_y v|^2}{v^3}\right).
 \end{align*}
 Therefore, it holds that
 \begin{align*}
	0 &= u_t - \lap u - f(u)\\
    &= f(u)\left[\frac{1}{v} + \frac{1}{v^2}\left(v_s + \frac{y}{2}\cdot \nabla v\right) \right] 
		 - f'(u)f(u)F(u) \frac{|\nabla v|^2}{v^3} - f(u)\left(\frac{\lap v}{v^2} - 2\frac{|\nabla v|^2}{v^3}\right) - f(u).
 \end{align*}
 Multiplying this equation by $\frac{v^2}{f(u)}$, we obtain that
 \begin{equation*}
	 v_s - \lap v + \frac{y}{2}\cdot \nabla v + \frac{|\nabla v|^2}{v} - v^2 + v -\frac{|\nabla v|^2}{v}\Big(f'(u)F(u)-1\Big) = 0.
 \end{equation*}
 We complete the proof.
\end{proof}

If $u$ goes to infinity, then the equation \eqref{veq} formally becomes
\begin{equation}\label{veqlim}
	v_s = \Delta v -\frac{y}{2} \cdot \nabla v - \frac{|\nabla v|^2}{v} + v^2 - v
\end{equation}

by Lemma \ref{fproperty1}. The next lemma asserts that if $n\leq2$, the stationary problem of \eqref{veqlim} has only the constant solution $v \equiv 1$. 
The following lemma is adapted from \cite[Lemma 3.1, Theorem 3.2]{L89}. 

\begin{lemma}\label{stasol}
	If $v$ is a positive bounded stationary solution to \eqref{veqlim} in $\R^n$ with $|\nabla v|/v \leq C$ for some constant $C>0$, then
	\begin{equation*}
		\frac{1}{4}\int_{\R^n} \frac{|\nabla v|^2}{v^2}|y|^2 \rho \, dy + \frac{2-n}{2}\int_{\R^n} \frac{|\nabla v|^2}{v^2} \rho \, dy = 0,
	\end{equation*}
	where $\rho(y) = e^{-\frac{|y|^2}{4}}$. Moreover, if $n \leq 2$, then $v \equiv 1$.
\end{lemma}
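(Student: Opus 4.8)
The plan is to reduce the stationary problem to the clean semilinear equation that Liu treats and then extract the stated identity by a Pohozaev-type argument. First I would set $w = \log v$, which is well defined and bounded above since $v$ is positive and bounded, and satisfies $|\nabla w| = |\nabla v|/v \le C$ by hypothesis; consequently $w$ grows at most linearly, $|w(y)| \le C'(1+|y|)$. A direct computation (dividing the stationary form of \eqref{veqlim} by $v$ and using $\Delta v/v = \Delta w + |\nabla w|^2$) shows that $w$ solves $\Delta w - \tfrac{y}{2}\cdot\nabla w = 1 - e^w$ on $\R^n$, equivalently $\nabla\cdot(\rho\nabla w) = \rho(1-e^w)$ with $\rho(y)=e^{-|y|^2/4}$, since $\nabla\cdot(\rho\nabla w) = \rho(\Delta w - \tfrac{y}{2}\cdot\nabla w)$. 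Note that $|\nabla v|^2/v^2 = |\nabla w|^2$, so the claimed identity is exactly $\tfrac14\int|\nabla w|^2|y|^2\rho + \tfrac{2-n}{2}\int|\nabla w|^2\rho = 0$.

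Next I would record three integral identities, all obtained by testing the equation $\nabla\cdot(\rho\nabla w) = \rho(1-e^w)$ against suitable multipliers and integrating over $\R^n$; the linear-in-$|y|$ growth of $w$, the bounds $|\nabla w|\le C$ and $e^w = v \le M$, interior elliptic regularity (which controls $\nabla^2 w$ locally), together with the fast decay of $\rho$, guarantee that all boundary terms at infinity vanish. Testing against $\rho$ gives $\int\rho e^w = \int\rho$. Testing against $\rho|y|^2$ and integrating by parts gives $2\int(n-\tfrac{|y|^2}{2})\rho w = \int|y|^2\rho(1-e^w)$. The Pohozaev step is to test against $\rho\,(y\cdot\nabla w)$: using $\nabla\cdot(\rho y) = (n-\tfrac{|y|^2}{2})\rho$ and $(y\cdot\nabla w)e^w = y\cdot\nabla(e^w)$, the right-hand side becomes $Q := \int(n-\tfrac{|y|^2}{2})\rho(e^w - w)$, while two integrations by parts turn the left-hand side into $\tfrac{n-2}{2}\int\rho|\nabla w|^2 - \tfrac14\int|y|^2\rho|\nabla w|^2$. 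Thus the desired identity is equivalent to $Q = 0$.

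The key computation is then $Q = 0$. Writing $Q = \int(n-\tfrac{|y|^2}{2})\rho e^w - \int(n-\tfrac{|y|^2}{2})\rho w$ and substituting $\int\rho e^w = \int\rho$ into the first integral and the $\rho|y|^2$-identity into the second, the unknown weighted moment $\int|y|^2\rho e^w$ cancels and one is left with $Q = n\int\rho - \tfrac12\int|y|^2\rho$. This vanishes by the elementary Gaussian moment identity $\int_{\R^n}|y|^2\rho\,dy = 2n\int_{\R^n}\rho\,dy$, which proves the stated identity.

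Finally, for $n \le 2$ the coefficient $\tfrac{2-n}{2}$ is nonnegative, so the identity is a sum of two nonnegative integrals equal to zero; hence each vanishes, in particular $\int|\nabla w|^2|y|^2\rho = 0$, which forces $\nabla w \equiv 0$ (the integrand is nonnegative, so $\nabla w = 0$ for $y\neq 0$, and then everywhere by continuity) and so $w$, and therefore $v = e^w$, is constant. Plugging a constant $v \equiv c > 0$ back into \eqref{veqlim} kills the gradient terms and leaves $c^2 - c = 0$, whence $c = 1$ and $v \equiv 1$. I expect the main obstacle to be twofold: making the integrations by parts rigorous (justifying the vanishing of the boundary terms from the a priori bounds and interior regularity), and engineering the cancellation of the unknown moment $\int|y|^2\rho e^w$ in the evaluation of $Q$, which is precisely what makes the potential terms disappear and leaves a purely quadratic-gradient identity.
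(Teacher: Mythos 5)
Your proof is correct, and it is essentially the argument behind the result the paper cites: the paper gives no proof of this lemma, deferring to Liu \cite[Lemma 3.1, Theorem 3.2]{L89}, where the same reduction $w=\log v$ to $\Delta w - \tfrac{y}{2}\cdot\nabla w = 1-e^w$ and the same Pohozaev multiplier $\rho\,(y\cdot\nabla w)$ (combined with the $\rho$- and $|y|^2\rho$-tested identities and the Gaussian moment $\int|y|^2\rho = 2n\int\rho$) yield the stated identity and the rigidity for $n\le 2$. The boundary terms at infinity are indeed harmless under the stated hypotheses, since every surface integrand is bounded by a polynomial times $e^{-R^2/4}$.
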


%球対称解の爆発点が原点のみであることを示す補題を追加する.
We end this section by introducing a lemma related to the set of blow-up points.
\begin{lemma}
	Under the assumptions of Theorem \ref{qssblowup}, the blow-up point of the solution $u$ is only at the origin.
\end{lemma}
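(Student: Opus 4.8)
The plan is to reduce the statement to a one-sided gradient estimate and then integrate it against $F$. Since $u(\cdot,t)$ is radially nonincreasing, $u(0,t) = \|u(\cdot,t)\|_{L^\infty(\Omega)} \to \infty$ as $t \to T$, so the origin is a blow-up point, and it only remains to show that no point $x$ with $0 < |x| < R$ is a blow-up point. Writing $u = u(r,t)$ with $r = |x|$, the key will be to produce a continuous function $c$, supported in an annulus $[\rho,R']\subset(0,R)$ and positive on a subinterval, such that the Friedman--McLeod type inequality
\[
	\partial_r u(r,t) \leq -c(r)\, f(u(r,t))
\]
holds for $t$ close to $T$. Granting this, since $F'(u) = -1/f(u)$ by \eqref{defF} we get $\partial_r F(u(r,t)) = -\partial_r u/f(u) \geq c(r)$, so integrating over $[b,a]$ with $0 < b < a < R$ gives $F(u(a,t)) \geq F(u(b,t)) + \int_b^a c(r)\,dr \geq \int_b^a c(r)\,dr > 0$. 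As $F$ is positive, decreasing and tends to $0$ at infinity, this bounds $u(a,t)$ uniformly in $t$; monotonicity then bounds $u$ on $\{|x| \geq a\}$, and since $a \in (0,R)$ is arbitrary the origin is the only blow-up point.

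To obtain the gradient inequality I would use the auxiliary function $J := \partial_r u + c(r) f(u)$ and the maximum principle, following \cite{FM85}. Writing \eqref{SHE} in radial form $u_t = \partial_r^2 u + \frac{n-1}{r}\partial_r u + f(u)$ and differentiating, a direct computation shows that $J$ satisfies
\[
	J_t - J_{rr} - \tfrac{n-1}{r}J_r - f'(u)J = -\tfrac{n-1}{r^2}\partial_r u - 2c'f'(u)\partial_r u - c\, f''(u)(\partial_r u)^2 - c''f(u) - \tfrac{n-1}{r}c'\, f(u).
\]
Choosing $c$ with compact support in $(0,R)$ makes the boundary contributions trivial, since then $J(0,t)=\partial_r u(0,t)=0$ and $J(R,t)=\partial_r u(R,t)\le 0$ regardless of whether $q=0$ or $q\ge 1$; the strong maximum principle applied to $\partial_r u$ gives $\partial_r u<0$ on $(0,R)$ at a fixed initial slice $t_0$, so $J(\cdot,t_0)\le 0$ for $c$ small. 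Because $\partial_r u \leq 0$, evaluating the right-hand side at a hypothetical first interior zero of $J$, where $\partial_r u = -c f(u)$, reduces the whole question to the sign condition
\[
	\tfrac{n-1}{r^2}c + 2cc'f'(u) - c^3 f''(u)f(u) - c'' - \tfrac{n-1}{r}c' < 0 .
\]
Here the super-exponential structure works in our favor: by the computation of $f''$ in the proof of Lemma \ref{fproperty1}, $f''(u)f(u)\sim p^2 u^{2p-2}f(u)^2 \to \infty$, so the term $-c^3 f''f$ is strongly negative wherever $u$ is large, and on $[\rho,R']$ the coefficient $\frac{n-1}{r^2}$ is bounded.

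The main obstacle is precisely the construction of $c$ and the uniform verification of this sign condition over the full range of values taken by $u$ on the annulus. Where $u$ is large the super-exponential term dominates and the inequality is comfortable, but on portions of $[\rho,R']$ where $u$ might remain moderate the product $f''f$ is only bounded, and there one must exploit the smallness of $c$ to keep the left-hand side negative, while simultaneously controlling the transition terms $2cc'f'$, $-c''$ and the genuinely two-dimensional term $-\frac{n-1}{r}c'$ at the edges of the support of $c$. Balancing the size of $c$ against the size of its derivatives on the narrow transition zones is the delicate point, and it is exactly here that the absence of scale invariance forces a modification of the classical Friedman--McLeod argument; once a suitable $c$ is fixed, the maximum principle yields $J\le 0$ and the integration scheme above closes the proof.
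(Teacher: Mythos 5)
The overall reduction you set up is the standard Friedman--McLeod scheme and is sound: a one-sided gradient inequality $\partial_r u\le -c(r)f(u)$ integrates, via $F'=-1/f$, to a positive lower bound for $F(u(a,t))$ and hence to boundedness of $u$ away from the origin. Your computation of the parabolic equation for $J=\partial_r u+c(r)f(u)$ and of the sign condition at a first interior touching point is also correct. (Note, incidentally, that the paper gives no proof of this lemma; it defers entirely to \cite[Section 4.2]{CS25} and \cite{FM85}, so you are reconstructing the cited argument.)

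The genuine gap is the construction of $c$, which you flag as ``delicate'' but which in fact cannot be carried out with your choices of gauge $G=f$ and of a small, compactly supported $c$. In the touching-point inequality
\[
\frac{n-1}{r^2}c-\frac{n-1}{r}c'-c''+2cc'f'(u)-c^3f''(u)f(u)<0,
\]
the potentially positive terms $\frac{n-1}{r^2}c-\frac{n-1}{r}c'-c''$ are \emph{linear} in $c$ and cannot be nonpositive throughout the support of a nontrivial, nonnegative, compactly supported $c$: at an interior maximum $r_0$ one has $c'(r_0)=0$, $c''(r_0)\le 0$ and $c(r_0)>0$, so the condition there reduces to $c^3f''(u)f(u)>\frac{n-1}{r_0^2}c-c''(r_0)\ (\ge 0)$. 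The only negative contributions, $-c^3f''f$ and (where $c'\le0$) $2cc'f'$, are cubic resp.\ quadratic in $c$. Since you must shrink $c$ (replace $c$ by $\lambda c$ with $\lambda$ small) to secure $J(\cdot,t_0)\le 0$ on the initial slice, the negative side scales like $\lambda^3$ against a positive side scaling like $\lambda$, so the inequality fails at $r_0$ unless $f''(u)f(u)\gtrsim\lambda^{-2}$ there, i.e.\ unless $u$ is already large at the (unknown) touching point --- exactly what cannot be assumed a priori; and no uniform lower bound on $f''f$ is available anyway (for $q=0$, $p>2$ one has $f''(u)f(u)\to 0$ as $u\to 0$). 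This is why the classical argument does not take $G=f$: with $G=f^{1-\delta}$ (or a power of $F$) the identity produces the extra term $cG\bigl(fG'/G-f'\bigr)=-\delta\,c\,f'(u)G(u)$, which is negative, linear in $c$, and proportional to $f'(u)$, and one takes $c(r)=\varepsilon r$ on the whole ball so that $-c''$ and $\frac{n-1}{r^2}c-\frac{n-1}{r}c'$ cancel identically, handling the boundary $r=R$ by Hopf's lemma or by arranging $G(0)=0$. As written, no admissible $c$ exists, so the gradient inequality --- and with it the proof --- is not established.
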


For the proof of this lemma, see \cite[Section 4.2]{CS25} or \cite[Theorem 2.3]{FM85}. 
We remark that the function $f(u) = e^{u^p}u^q$ for $p > 1$ and $q \in \{0\} \cup [1,\infty)$ satisfies the assumptions of \cite[Section 4.2]{CS25}.

\section{Proof of Theorem \ref{qssblowup}}
In this section, we prove the main theorem. As a preliminary step, we prove a lemma that will be needed for the computations concerning an energy functional. 
In what follows, we set $s_0 \coloneq -\log T$.
\begin{lemma}\label{intdifferential}
	Let $\alpha \in (0,\infty)$. If $g:B_{s^\alpha} \times (s_0,\infty) \rightarrow \R$ is a smooth function, then 
	\begin{equation*}
		\frac{d}{ds}\int_{B_{s^\alpha}} g(y,s) \, dy = \int_{B_{s^\alpha}} g_s(y,s)\ dy + \frac{\alpha}{s}\int_{\partial B_{s^\alpha}} g(y,s) (y\cdot \nu) \, dS
	\end{equation*}
	for $s>0$, where $\nu$ is the unit outer normal vector to $\partial B_{s^\alpha}$.
\end{lemma}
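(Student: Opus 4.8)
The plan is to reduce the moving-domain derivative to an ordinary differentiation under the integral sign over a \emph{fixed} domain by rescaling, and then to recognize the resulting extra terms as a divergence, so that the divergence theorem produces exactly the claimed boundary integral. In effect this is a Reynolds-transport (Leibniz) rule for the radially expanding ball $B_{s^\alpha}$, whose boundary moves with normal velocity $\tfrac{d}{ds}s^\alpha = \alpha s^{\alpha-1}$; note that on $\partial B_{s^\alpha}$ we have $y\cdot\nu = |y| = s^\alpha$, so $\tfrac{\alpha}{s}(y\cdot\nu) = \alpha s^{\alpha-1}$, confirming that the two formulations agree.

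First I would freeze the domain by the substitution $y = s^\alpha z$, which maps the unit ball $B_1$ onto $B_{s^\alpha}$ with Jacobian $s^{n\alpha}$, giving
\[
\int_{B_{s^\alpha}} g(y,s)\,dy = s^{n\alpha}\int_{B_1} g(s^\alpha z, s)\,dz .
\]
Since $B_1$ no longer depends on $s$ and $g$ is smooth (so the integrand and its $s$-derivative are continuous, hence bounded, on $\overline{B_1}\times[a,b]$ for any $[a,b]\subset(s_0,\infty)$), I may differentiate under the integral sign. Applying the product and chain rules to $s^{n\alpha} g(s^\alpha z, s)$ and then undoing the substitution, the term in which $\partial_s$ hits $g$ directly returns $\int_{B_{s^\alpha}} g_s\,dy$, while the term from the $z$-dependence of $g$ together with the term from differentiating the Jacobian $s^{n\alpha}$ combine into
\[
\frac{\alpha}{s}\int_{B_{s^\alpha}}\bigl(y\cdot\nabla g + n\,g\bigr)\,dy .
\]

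Next I would observe that the integrand above is exactly a divergence, namely $y\cdot\nabla g + n\,g = \nabla\cdot(g\,y)$, since $\nabla\cdot y = n$. The divergence theorem on $B_{s^\alpha}$ then yields
\[
\frac{\alpha}{s}\int_{B_{s^\alpha}} \nabla\cdot(g\,y)\,dy = \frac{\alpha}{s}\int_{\partial B_{s^\alpha}} g\,(y\cdot\nu)\,dS ,
\]
which is precisely the asserted boundary term; collecting the two contributions completes the identity.

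There is no serious obstacle here, as the statement is routine once the domain is frozen. The only points requiring a word of care are the justification of differentiation under the integral sign—handled by the smoothness of $g$ and compactness on each closed subinterval of time—and the bookkeeping of the chain rule together with the Jacobian factor. The decisive algebraic step is rewriting $y\cdot\nabla g + n\,g$ as $\nabla\cdot(g\,y)$, after which the divergence theorem converts the volume integral into the desired surface integral.
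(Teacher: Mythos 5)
Your proposal is correct and follows essentially the same route as the paper: freeze the domain via $y = s^\alpha z$, differentiate under the integral sign, and convert the volume terms $\frac{\alpha}{s}\int_{B_{s^\alpha}}(y\cdot\nabla g + n\,g)\,dy$ into the boundary integral (the paper phrases this last step as integration by parts rather than as the divergence theorem applied to $\nabla\cdot(g\,y)$, but these are the same computation).
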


\begin{proof}
	Set $y = s^{\alpha}z$, then
  \begin{equation*}
		\int_{B_{s^\alpha}} g(y,s) \, dy = \int_{B_1} g(s^{\alpha}z,s)s^{\alpha n} \, dz.
	\end{equation*}
	Therefore we obtain that
	\begin{align*}
		\frac{d}{ds}\int_{B_{s^\alpha}} g(y,s) \, dy &= \int_{B_1} g_s(s^\alpha z,s)s^{\alpha n} \, dz \\
																								 & \quad + \int_{B_1} \left(\nabla g(s^\alpha z,s) \cdot z\right)\alpha s^{\alpha n + \alpha - 1} \, dz + \int_{B_1} g(s^\alpha z,s)\alpha ns^{\alpha n-1}  \, dz\\
																								 &= \int_{B_{s^\alpha}} g_s(y,s)\, dy + \frac{\alpha}{s}\int_{B_{s^\alpha}} \nabla g(y,s)\cdot y\, dy + \frac{\alpha n}{s}\int_{B_{s^\alpha}} g(y,s) \, dy\\
																								 &= \int_{B_{s^\alpha}} g_s(y,s)\, dy + \frac{\alpha}{s}\int_{\partial B_{s^\alpha}} g(y,s) (y\cdot \nu) \, dS.
	\end{align*}
	In the last equality, we applied the integration by parts to the second term. The proof is completed.
\end{proof}

The following lemma assures that the solution $u(x,t)$ diverges to infinity as $t \rightarrow T$ along the curve $|x| = (-\log (T-t))^\alpha \sqrt{T-t}$.
It is the most important lemma in this paper to complete the proof of Theorem \ref{qssblowup}.

 \begin{lemma}\label{esth}
	Under the assumptions of Theorem \ref{qssblowup}, let $u = u(x,t) = u(|x|,t)$ be the type I blow-up solution to \eqref{SHE} and $\alpha \in (0,\frac{1}{p})$. Define $h_\alpha(t)$ by 
	\begin{equation*}
		h_\alpha(t) \coloneq u((-\log (T-t))^\alpha\sqrt{T-t}, t), \quad\ t\in (\max\{T-1,0\},T).
	\end{equation*}
	Then there exists $t_1 \in (\max\{T-1,0\},T)$ such that 
	\begin{equation}\label{eq:esth}
		h_\alpha(t) \geq \frac{1}{2}F^{-1}(T-t)\quad\ \text{for} \ t \in (t_1,T).
	\end{equation}
 \end{lemma}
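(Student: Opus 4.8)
The plan is to pass to the rescaled variable $v$ of Proposition~\ref{qsstrans} with $a=0$ (the origin being the only blow-up point), for which the point defining $h_\alpha(t)$ corresponds to $|y|=s^\alpha$ with $s=-\log(T-t)$. Since $u=F^{-1}\big((T-t)/v\big)$ and $F^{-1}$ is decreasing, the desired estimate \eqref{eq:esth} is equivalent to the lower bound
\[
  v(s^\alpha,s)\ \geq\ \frac{T-t}{F\!\big(\tfrac12 F^{-1}(T-t)\big)}.
\]
By Lemma~\ref{fproperty2} the right-hand side behaves like $e^{-s(1-2^{-p})}$, so it decays only exponentially in $s$; hence even a rather crude lower bound on $v$ will suffice, and there is no need for the refined energy machinery.

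First I would establish the uniform bound $|\nabla\log v|=|\nabla v|/v\leq C$. Using the identity $\nabla_x u=f(u)v^{-2}\sqrt{T-t}\,\nabla_y v$ from the proof of Proposition~\ref{qsstrans}, one has $|\nabla_y v|/v=\sqrt{T-t}\,|\nabla_x u|/(f(u)F(u))$, which is $\leq C$ by the derivative estimate \eqref{eq:estderivative}. Because $|x|=s^\alpha e^{-s/2}\to0$ as $s\to\infty$, for $t$ close to $T$ the entire radial segment from $0$ to the point $x$ lies inside a single fixed cylinder $B_r\times[T-r^2,T)$ (with $u$ positive there by the strong maximum principle), so the constant $C$ is uniform along the segment.

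Next, since $u$ and hence $v$ are radial, integrating $|\partial_r\log v|\leq C$ from $0$ to $|y|=s^\alpha$ gives $\log v(s^\alpha,s)\geq \log v(0,s)-Cs^\alpha$. At the center, \eqref{belowrate} together with radial monotonicity yields $u(0,t)\geq F^{-1}(T-t)$, whence $v(0,s)=(T-t)/F(u(0,t))\geq1$ and $\log v(0,s)\geq0$. Therefore $v(s^\alpha,s)\geq e^{-Cs^\alpha}$, a lower bound comfortably above the exponentially small target identified in the first step.

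Finally I would translate back: from $v(s^\alpha,s)\geq e^{-Cs^\alpha}$ and monotonicity of $F^{-1}$,
\[
  h_\alpha(t)=F^{-1}\!\Big(\tfrac{e^{-s}}{v(s^\alpha,s)}\Big)\ \geq\ F^{-1}\!\big(e^{-(s-Cs^\alpha)}\big).
\]
Applying the asymptotics of Lemma~\ref{fproperty2} (or the explicit two-sided bounds \eqref{Festimate}) to both $F^{-1}(e^{-(s-Cs^\alpha)})\sim(s-Cs^\alpha)^{1/p}$ and $F^{-1}(T-t)=F^{-1}(e^{-s})\sim s^{1/p}$, and using that $\alpha<1$ (which holds since $\alpha<1/p<1$) so that $Cs^\alpha=o(s)$, the ratio of the two tends to $1$ and hence exceeds $\tfrac12$ for all sufficiently large $s$; this gives \eqref{eq:esth} on some interval $(t_1,T)$. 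The one genuinely delicate point — and the step I expect to be the main obstacle — is guaranteeing that the constant in \eqref{eq:estderivative} is truly uniform along the \emph{moving} curve, i.e. that for all large $s$ the relevant space--time points remain inside one fixed cylinder on which Proposition~\ref{estderivative} applies; once that is secured, the exponentially small loss $e^{-Cs^\alpha}$ is negligible against the target, which is exactly why only $\alpha<1$ (and in particular any $\alpha<1/p$) is needed.
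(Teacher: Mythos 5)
Your proposal is correct, but it reaches the conclusion by a genuinely different route than the paper. The paper argues by contradiction: assuming $h_\alpha(t_k)<\tfrac12 F^{-1}(T-t_k)$ along a sequence, it bounds $u(0,t_k)-h_\alpha(t_k)=\int_0^{s^\alpha\sqrt{T-t_k}}(-u_r)\,dr$ from above by $C\sup_r f(u)F(u)\cdot s^\alpha$ via \eqref{eq:estderivative}, and from below by $\tfrac14 s^{1/p}$ via \eqref{belowrate} and \eqref{asympF-1}; since $\alpha<1/p$ this forces $\sup f(u)F(u)\to\infty$, contradicting the boundedness of $fF$ (which the paper must verify separately near $u=0$ by l'H\^opital). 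You instead integrate the same derivative estimate at the level of $\log v=-\log F(u)+\log(T-t)$: the bound $|\nabla v|/v\le C$ (which is exactly $\sqrt{T-t}\,|\nabla_x u|/(f(u)F(u))\le C$, so the nonlinearity cancels out) gives $v(s^\alpha,s)\ge v(0,s)e^{-Cs^\alpha}\ge e^{-Cs^\alpha}$ directly, and then the two-sided asymptotics of $F^{-1}$ from Lemma \ref{fproperty2} convert this into $h_\alpha(t)\ge F^{-1}(e^{-(s-Cs^\alpha)})\ge(1-o(1))F^{-1}(e^{-s})$. Both proofs rest on the same three ingredients (\eqref{eq:estderivative}, \eqref{belowrate}, \eqref{asympF-1}), and your worry about uniformity of the constant along the moving curve is unfounded for the same reason it is in the paper: $|x|=s^\alpha e^{-s/2}\to 0$, so one fixed cylinder $B_r\times[T-r^2,T)$ eventually contains the whole radial segment. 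What your version buys: it is direct rather than by contradiction, it avoids the auxiliary boundedness of $f(u)F(u)$ near $u=0$ entirely, it shows the ratio $h_\alpha/F^{-1}(T-t)$ actually tends to $1$ (not merely exceeds $\tfrac12$), and it works for all $\alpha<1$ rather than only $\alpha<1/p$ — the restriction $\alpha<1/p$ in the statement is what makes the paper's cruder comparison of $s^{1/p}$ against $s^\alpha$ close.
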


 \begin{proof}
%	Note that $h_\alpha(t)$ is monotonically nondecreading. Indeed, due to the assunptions on $u_t$ and $u_r$ and 
%	\begin{align*}
%		(h_\alpha)_t(t) &= u_t((-\log (T-t))^\alpha\sqrt{T-t}, t) \\
%										&\quad - u_r(((-\log (T-t))^\alpha\sqrt{T-t}, t))\frac{(-\log (T-t))^{\alpha-1}(-\log (T-t)-2)}{2\sqrt{T-t}},
%	\end{align*}
%	$(h_\alpha)_t \geq  0$ if $-\log (T-t) -2 \geq 0$.\par
	Assume that \eqref{eq:esth} fails, then there exists a sequence $\{t_k\}_{k \in \N}$ such that $t_k \rightarrow T$ as $k \rightarrow \infty$ and
	\begin{equation*}
		h_\alpha(t_k) < \frac{1}{2}F^{-1}(T-t_k)\quad\  \text{for}\ k \in \N.
	\end{equation*}
	Then \eqref{belowrate} and \eqref{asympF-1} imply that
	\begin{align*}
		u(0,t_k) - h_\alpha(t_k) > \frac{1}{2}F^{-1}(T-t_k) > \frac{1}{4}(-\log (T-t_k))^{\frac{1}{p}}
	\end{align*}
	for sufficiently large $k$. On the other hand, 
	by using \eqref{eq:estderivative}, we have
	\begin{align*}
		u(0,t) - h_\alpha(t) &= \int_{0}^{(-\log (T-t))^\alpha\sqrt{T-t}} -u_r(r,t)\,dr\\
												 &\leq \int_{0}^{(-\log (T-t))^\alpha\sqrt{T-t}} Cf(u(r,t))F(u(r,t))(T-t)^{-\frac{1}{2}}\, dr\\
												 &\leq C\sup_{r \in (0,(-\log (T-t))^\alpha\sqrt{T-t}]} f(u(r,t))F(u(r,t))(-\log (T-t))^\alpha
	%											 &\leq \int_{0}^{((-\log (T-t))^\alpha\sqrt{T-t}} \frac{C\sqrt{T-t}}{pu^{p-1}(r,t)} \,dr\\
	%											 &\leq \frac{C((-\log (T-t))^\alpha}{ph_\alpha^{p-1}(t)},
	\end{align*} 
	for $t \in (\max \{T-1,0\},T)$. Hence we obtain
	\begin{equation*}
		\frac{1}{4}(-\log (T-t_k))^\frac{1}{p} \leq C\sup_{r \in (0,(-\log (T-t_k))^\alpha\sqrt{T-t_k}]} f(u(r,t_k))F(u(r,t_k))(-\log (T-t_k))^\alpha.
	\end{equation*}
	Since $\alpha \in (0,\frac{1}{p})$, it follows that 
  \begin{equation*}
		\lim_{k \rightarrow \infty} \sup_{r \in (0,(-\log (T-t_k))^\alpha\sqrt{T-t_k}]} f(u(r,t_k))F(u(r,t_k)) = \infty.
	\end{equation*}
	However, this contradicts the fact that $f(u)F(u)$ is bounded near $u = 0$. 
  Indeed, when $q = 0$, we obtain $F(0) < \infty$ and $f(0) = 1$, therefore the boundedness near $u = 0$ holds.
	If $q \geq 1$, since $f(u) \rightarrow 0$ and $F(u) \rightarrow \infty$ as $u \rightarrow 0$, it follows from l'H\^{o}pital's rule that 
	 \begin{align*}
		\lim_{u \rightarrow 0+} f(u)F(u) &= \lim_{u \rightarrow 0+} \frac{F(u)}{\frac{1}{f(u)}}\\
																		&= \lim_{u \rightarrow 0+} \frac{f(u)}{f'(u)}\\
																		&= \lim_{u \rightarrow 0+} \frac{e^{u^p}u^q}{(pu^{p-1} + qu^{-1})e^{u^p}u^q}\\
																    &= 0,
	\end{align*}
	therefore we obtain the boundedness of $f(u)F(u)$ near $u = 0$ and the conclusion follows.
 \end{proof}

Let $\alpha \in (0,\infty)$ and $\rho = \rho (y) = e^{-\frac{|y|^2}{4}}$. 
For a smooth function $v:\Omega (s) \times (s_0,\infty) \rightarrow \R$, where $\Omega(s) = \{y \in \R^n \mid ye^{-\frac{s}{2}} \in \Omega \}$, define an energy functional $E[v]$ by
\begin{equation*}
	E[v](s) \coloneq \frac{1}{2} \int_{B_{s^\alpha}} \frac{|\nabla v|^2}{v^2}\rho \, dy - \int_{B_{s^\alpha}} (v - \log v)\rho \, dy
\end{equation*}
for $s>0$.
Note that the integration domain is usually taken to be $\Omega(s)$, but for the sake of computational convenience we consider the energy functional on $B_{s^\alpha} (\subset \Omega(s)$ for sufficiently large $s$) instead.\par
The following lemma provides a kind of energy estimate for the solution to \eqref{SHE}, which plays an important role in the proof of the main theorem.
The calculations of the proof in this lemma are based on \cite[Lemma 4.4]{L89}. However, due to the lack of self-similarity, we need to estimate the term including $f'F-1$, which did not appear in \cite{L89}.

\begin{lemma}\label{estenergy}
	Under the assumptions of Theorem \ref{qssblowup}, if $\alpha \in (0,\frac{1}{p})$, then $v(y,s)$ defined by \eqref{vdef} with $a = 0$ satisfies
	\begin{equation*}
	  \frac{1}{2}\int_{B_{s^\alpha}} \frac{v_s^2}{v^2}\rho\, dy \leq -\frac{d}{ds}E[v](s) + H(s) \quad\ \text{for}\  s > s_0',
	\end{equation*}
	where $s_0' \coloneq \max \{s_0, 0\}$ and $H(s)$ is a function satisfying $\int_{s_0'}^{\infty} H(s)\, ds < \infty$.
\end{lemma}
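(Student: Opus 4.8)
The plan is to differentiate the energy $E[v]$ along the flow, read off the dissipation $\int_{B_{s^\alpha}}\tfrac{v_s^2}{v^2}\rho\,dy$, and treat the factor $f'(u)F(u)-1$ in \eqref{veq} together with the moving boundary $\partial B_{s^\alpha}$ as perturbations of Liu's scale-invariant computation. First I would apply Lemma \ref{intdifferential} with $g=\bigl[\tfrac12\frac{|\nabla v|^2}{v^2}-v+\log v\bigr]\rho$, writing $\frac{d}{ds}E[v]=\int_{B_{s^\alpha}}g_s\,dy+\frac{\alpha}{s}\int_{\partial B_{s^\alpha}}g\,(y\cdot\nu)\,dS$ and recording the last term as a boundary contribution $B_2$.

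For the bulk term I would expand $g_s$, integrate the piece $\int\frac{\nabla v\cdot\nabla v_s}{v^2}\rho\,dy$ by parts using $\nabla\rho=-\tfrac y2\rho$, and substitute the evolution equation \eqref{veq}. After the cancellation that occurs already in the exponential case, this produces exactly $-\int_{B_{s^\alpha}}\frac{v_s^2}{v^2}\rho\,dy$, a second boundary term $B_1=\int_{\partial B_{s^\alpha}}\frac{v_s}{v}\frac{\nabla v\cdot\nu}{v}\rho\,dS$, and the single genuinely new bulk term $R_1=\int_{B_{s^\alpha}}\frac{v_s|\nabla v|^2}{v^3}\bigl(f'(u)F(u)-1\bigr)\rho\,dy$ coming from the perturbation in \eqref{veq}. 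Thus $\int_{B_{s^\alpha}}\frac{v_s^2}{v^2}\rho\,dy=-\frac{d}{ds}E[v]+R_1+B_1+B_2$, and a Young inequality $|R_1|\le\frac12\int\frac{v_s^2}{v^2}\rho\,dy+\frac12\int\frac{|\nabla v|^4}{v^4}(f'(u)F(u)-1)^2\rho\,dy$ absorbs half the dissipation, yielding the claimed estimate with $H(s)=\frac12\int_{B_{s^\alpha}}\frac{|\nabla v|^4}{v^4}(f'(u)F(u)-1)^2\rho\,dy+|B_1|+|B_2|$.

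It then remains to prove $\int_{s_0'}^\infty H(s)\,ds<\infty$, and this is where the hypotheses enter. For the bulk part of $H$, Proposition \ref{estderivative} translates into $\frac{|\nabla v|}{v}\le C$, so $\frac{|\nabla v|^4}{v^4}\le C^4$, while Lemma \ref{esth} together with the radial monotonicity of $u$ forces $u(x,t)\ge h_\alpha(t)\ge\frac12 F^{-1}(T-t)$ throughout $B_{s^\alpha}$. Combining $F^{-1}(T-t)\sim s^{1/p}$ from \eqref{asympF-1} with the estimate \eqref{estf'F} gives $(f'(u)F(u)-1)^2\lesssim u^{-2p}\lesssim s^{-2}$ on $B_{s^\alpha}$, so after integrating the Gaussian weight $\rho$ the bulk part of $H$ is $\lesssim s^{-2}$, which is integrable.

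The main obstacle is the boundary term $B_1$, since $\frac{\nabla v\cdot\nu}{v^2}$ is only of size $1/v$ and $v$ may be as small as $e^{-Cs}$ on $\partial B_{s^\alpha}$. The resolution is to estimate $v_s$ through the evolution equation \eqref{veq}: differentiating the transformation \eqref{vdef} twice and invoking the second-derivative part of \eqref{eq:estderivative} yields $|\nabla^2 v|\le Cv$ and $|\nabla v|\le Cv$, so that every term on the right-hand side of \eqref{veq}—in particular $\tfrac y2\cdot\nabla v$ with $|y|=s^\alpha$—is $O(s^\alpha v)$, whence $|v_s|\le Cs^\alpha v$ on $\partial B_{s^\alpha}$. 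The factors of $v$ then cancel in $\frac{v_s}{v}\frac{\nabla v\cdot\nu}{v}$, leaving $|B_1|\lesssim s^{\alpha n}e^{-s^{2\alpha}/4}$; the analogous (easier) bound $|B_2|\lesssim s^{\alpha n}e^{-s^{2\alpha}/4}$ follows from $v\le\frac1c$ and $|\log v|\lesssim s$ (the latter again from Lemma \ref{esth} and \eqref{asympF-1}). Since the weight $\rho$ restricted to $|y|=s^\alpha$ equals $e^{-s^{2\alpha}/4}$ and decays faster than any power of $s$, both boundary terms are integrable over $(s_0',\infty)$; on the bounded initial interval $(s_0',s_1]$ (with $s_1=-\log(T-t_1)$ from Lemma \ref{esth}), where $u$ is bounded away from blow-up, $H$ is bounded, and this completes the argument.
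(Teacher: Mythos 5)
Your proposal is correct and follows essentially the same route as the paper: the same energy identity (the paper obtains it by multiplying \eqref{veq} by $\frac{v_s}{v^2}\rho$ and integrating, which is your computation read in reverse), the same Young-inequality absorption producing $H(s)=\frac12\int(\frac{|\nabla v|}{v})^4(f'(u)F(u)-1)^2\rho\,dy+(\text{boundary terms})$, the same $s^{-2}$ bound on the bulk term via Lemma \ref{esth}, \eqref{asympF-1} and \eqref{estf'F}, and the same pairing $\frac{v_s}{v}\cdot\frac{\partial_\nu v}{v}$ with $|v_s|/v\le C(1+|y|)$ to tame the boundary integrals against the Gaussian weight. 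No substantive differences.
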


\begin{proof}
Let $u$ be the blow-up solution to $\eqref{SHE}$ and define $v(y,s)$ by \eqref{vdef} with $a=0$. Multiplying \eqref{veq} by $\frac{v_s}{v^2}\rho$ and integrating over $B_{s^\alpha}$, we get
\begin{align}
		\int_{B_{s^\alpha}} \frac{v_s^2}{v^2}\rho\ dy &= \int_{B_{s^\alpha}} \frac{v_s}{v^2}\nabla \cdot (\rho\nabla v)\,dy - \int_{B_{s^\alpha}} \frac{|\nabla v|^2}{v^3}v_s\rho\,dy \notag \\
																				   &\quad + \int_{B_{s^\alpha}} \left(1 - \frac{1}{v} \right)v_s\rho\,dy + \int_{B_{s^\alpha}} \frac{|\nabla v|^2}{v^2}\Bigl(f'(u)F(u) - 1\Bigr)\frac{v_s}{v}\rho\,dy \notag \\
																					 &= -\int_{B_{s^\alpha}} \left(\nabla \left(\frac{v_s}{v^2}\right)\cdot \nabla v\right) \rho \,dy + \int_{\partial B_{s^\alpha}} \frac{v_s}{v^2} \frac{\partial v}{\partial \nu} \rho \,dS 
																					   - \int_{B_{s^\alpha}} \frac{|\nabla v|^2}{v^3}v_s\rho\,dy \notag \\
																				   &\quad + \int_{B_{s^\alpha}} \left(1 - \frac{1}{v} \right)v_s\rho\,dy + \int_{B_{s^\alpha}} \frac{|\nabla v|^2}{v^2}\Bigl(f'(u)F(u) - 1\Bigr)\frac{v_s}{v}\rho\,dy \notag \\
                                           &= -\frac{1}{2}\int_{B_{s^\alpha}} \frac{d}{ds}\left(\frac{|\nabla v|^2}{v^2}\right)\rho\,dy + \int_{B_{s^\alpha}} \frac{d}{ds}(v - \log v)\rho\,dy \notag \\
																				   &\quad + \int_{\partial B_{s^\alpha}} \frac{v_s}{v^2}\frac{\partial v}{\partial \nu} \rho \, dS + \int_{B_{s^\alpha}} \frac{|\nabla v|^2}{v^2}\Bigl(f'(u)F(u) - 1\Bigr)\frac{v_s}{v}\rho\,dy\notag \\
                                           &= -\frac{d}{ds}E[v](s) + G(s) + \int_{B_{s^\alpha}} \frac{|\nabla v|^2}{v^2}\Bigl(f'(u)F(u) - 1\Bigr)\frac{v_s}{v}\rho\,dy, \label{energy} 
 \end{align}
where
\begin{equation*}
	G(s) = \int_{\partial B_{s^\alpha}} \frac{v_s}{v^2}\frac{\partial u}{\partial \nu} \rho \,dS - \frac{\alpha}{s} \int_{\partial B_{s^\alpha}} \left(\frac{1}{2}\frac{|\nabla v|^2}{v^2} - v + \log v\right)(y \cdot \nu) \rho\, dS.
\end{equation*}
In the last equality, we used Lemma \ref{intdifferential}. Applying Young's inequality to the last term, we have
\begin{equation*}
    \frac{1}{2}\int_{B_{s^\alpha}} \frac{v_s^2}{v^2}\rho\, dy %&\leq -\frac{d}{ds}E[v](s) + G(s) + \frac{1}{2}\int_{B_{s^\alpha}}\left(\frac{|\nabla v|}{v}\right)^4\Bigl(f'(u)F(u) - 1\Bigr)^2 \rho\, dy\\
                                                               \leq -\frac{d}{ds}E[v](s) + H(s),
\end{equation*} 
where
\begin{equation}\label{defH}
	H(s) \coloneq G(s) + \frac{1}{2}\int_{B_{s^\alpha}}\left(\frac{|\nabla v|}{v}\right)^4\Bigl(f'(u)F(u) - 1\Bigr)^2 \rho\, dy.
\end{equation}
Let us see the integrability of the function $H$. First, we estimate the function $G(s)$. 
We note that by Lemma \ref{esth}, we have 
\begin{equation}\label{udiverge}
  u(s^\alpha e^{-\frac{s}{2}}, T-e^{-s}) \rightarrow \infty\quad\  \text{as}\  s \rightarrow \infty.
\end{equation}
It follows from the radially decreasing property of the solution $u$, Lemma \ref{fproperty1} and \eqref{udiverge} that
\begin{equation}\label{f'F-1bounded}
	|f'(u(ye^{-\frac{s}{2}}, T-e^{-s}))F(u(ye^{-\frac{s}{2}}, T-e^{-s})) - 1| \leq C
\end{equation}
for sufficiently large $s>0$ and $y \in B_{s^\alpha}$.
Computations in the proof of Proposition \ref{qsstrans}, \eqref{eq:estderivative} and \eqref{f'F-1bounded} imply that there exists a constant $C > 0$ such that
\begin{equation}\label{vdiffest}
	\frac{|\nabla v|}{v} \leq C,\quad\ \frac{|\lap v|}{v} \leq C\quad \ \text{in}\ B_{s^\alpha}
\end{equation}
 for sufficiently large $s>0$. Moreover, since the solution $u$ is of type I, the function $v$ is also bounded. Indeed, by applying the function $F$ to the inequality \eqref{type1}, we have $F(u(x,t)) \geq c(T-t)$ for $t \in (\tilde{t},T)$.
 Therefore from the equation \eqref{veq}, Remark \ref{f'Fbound}, \eqref{f'F-1bounded} and \eqref{vdiffest}, we have
 \begin{equation}\label{vs}
	\begin{split}
		\frac{|v_s|}{v} &\leq \frac{|\lap v|}{v} + \frac{|y|}{2}\frac{|\nabla v|}{v} + \frac{|\nabla v|^2}{v^2} + v + 1 + \frac{|\nabla v|^2}{v^2}|f'(u)F(u)-1|\\
	        &\leq C(1+|y|)
	\end{split}
 \end{equation}
 in $B_{s^\alpha}$ for sufficiently large $s > 0$. 
 Define $w(y,s) = \log v(y,s),$ then by applying the function $-\log F$, which is nondecreasing, to \eqref{belowrate}, we have $w(0,s) \geq 0$ for $s>s_0$. Since $\nabla w = \frac{|\nabla v|}{v}$ and \eqref{vdiffest} holds, we get
 \begin{align}\label{wbelowest}
  |w(y,s) - w(0,s)| \leq C |y|
 \end{align}
 for sufficiently large $s$. Moreover, It follows from \eqref{type1} that the function $w$ is bounded above in the same way to get the boundedness of $v$.
 Therefore we have
	\begin{equation}\label{vbelow}
		-C|y| \leq w(y,s) \leq C \quad \ \text{in}\ B_{s^\alpha}
  \end{equation} 
  for sufficiently large $s$.
% Since $u$ is nonnegative and $F$ is decreasing, we have $v \geq \frac{e^{-s}}{F(0)}$. Furthermore, by \eqref{vdiffest}, it follows that
% \begin{equation}\label{logv}
%	\begin{aligned}
%		|\log v(y,s)| &\leq |\log v(0,s)| + C|y|\\
%								  &\leq C(s+|y|).
%	\end{aligned}
% \end{equation} 
 By combining the boundedness of $v$, \eqref{vdiffest}, \eqref{vs} and \eqref{vbelow}, we obtain that
 \begin{align*}
	\left| \int_{\partial B_{s^\alpha}} \frac{v_s}{v^2}\frac{\partial v}{\partial \nu} \rho \,dS \right| 
			&\leq \int_{\partial B_{s^\alpha}} \left|\frac{v_s}{v}\right|\frac{|\nabla v|}{v}\rho\,dS\\
			&\leq C(s^\alpha + 1)s^{\alpha(n-1)}e^{-\frac{s^{2\alpha}}{4}},\\
	\left| \int_{\partial B_{s^\alpha}} \log v(y,s) (y \cdot \nu)\rho \, dS\right| &\leq Cs^{\alpha(n+1)}e^{-\frac{s^{2\alpha}}{4}},\\
	\left| \int_{\partial B_{s^\alpha}} \frac{|\nabla v|^2}{v^2} (y \cdot \nu)\rho \, dS\right| &\leq Cs^{\alpha n}e^{-\frac{s^{2\alpha}}{4}}
 \end{align*}
 and 
 \begin{equation*}
		\left| \int_{\partial B_{s^\alpha}} v (y \cdot \nu)\rho \, dS\right| \leq  Cs^{\alpha n}e^{-\frac{s^{2\alpha}}{4}}.
 \end{equation*}
 Hence we get
 \begin{equation}\label{estG(s)}
	\int_{s_0'}^{\infty} G(s)\, ds < \infty.
 \end{equation}

 Let us estimate the remaining term including $f'F-1$ by using Lemma \ref{esth}.
 It follows from \eqref{vdiffest} that
 \begin{align*}
	\int_{B_{s^\alpha}}\left(\frac{|\nabla v|}{v}\right)^4\Bigl(f'(u)F(u) - 1\Bigr)^2 \rho\, dy 
										&\leq CI,
										%&= C\int_{B_{s^{\alpha}}} \left(f'(u)F(u) - 1\right)^2 \rho\, dy \\
										%&\quad + C\int_{B_{s^\alpha}\setminus B_{s^{\alpha}}}\left(f'(u)F(u) - 1\right)^2 \rho\, dy
 \end{align*}
 where
 \begin{equation*}
	I = \int_{B_{s^\alpha}}\left(f'(u)F(u) - 1\right)^2 \rho\, dy.
 \end{equation*}
% Since $-C \leq 1-f'(u)F(u) \leq 1$ for $u \in (\dl, \infty)$, we have 
% \begin{equation*}
%	I_2 \leq \int_{B_{s^\alpha}\setminus B_{s^{\alpha}}} \rho\, dy \leq Cs^{n}e^{-\frac{s^{2\alpha}}{4}}.
% \end{equation*}
 Lemma \ref{fproperty1}, Lemma \ref{fproperty2}, Lemma \ref{esth} and the radially decreasing property of the solution $u$ imply that there exist $s_1 > 0$ such that if $s \geq s_1$, then
 \begin{align*}
	I &\leq \int_{B_{s^{\alpha}}} \frac{C}{(pu^{p}(ye^{-\frac{s}{2}},T-e^{-s})+q)^2} \rho\,dy\\
			&\leq \frac{C}{(ph_{\alpha}^{p}(T-e^{-s})+q)^2}\int_{B_{s^{\alpha}}} \rho\,dy\\
			&\leq \frac{C}{\left(\frac{p}{2}(F^{-1}(e^{-s}))^{p} + q\right)^2}\\
			&\leq \frac{C}{\left(\frac{p}{4}s + q\right)^2}.
 \end{align*}
 Hence we have
 \begin{equation}\label{estf'F-1}
	\int_{s_0'}^{\infty} \int_{B_{s^\alpha}}\left(\frac{|\nabla v|}{v}\right)^4\Bigl(f'(u)F(u) - 1\Bigr)^2 \rho\, dy\,ds < \infty
 \end{equation}
 and it follows from \eqref{defH}, \eqref{estG(s)} and \eqref{estf'F-1}, the function $H$ is integrable on $(s_0',\infty)$.
\end{proof}

 To complete the proof of Theorem \ref{qssblowup}, we need the following lemma.
 \begin{lemma}\label{vinftypositive}
	Let $\{s_j\}_{j\in\N}$ be an increasing sequence such that $s_j \rightarrow \infty$ as $j \rightarrow \infty$ and assume that 
	$v_j(y,s) \coloneq v(y,s+s_j)$ converges to a function $v_\infty(y,s)$ uniformly on compact sets in $\R^{n+1}$, then either $v_\infty \equiv 0$ or $v_\infty > 0$ in $\R^{n+1}$.
 \end{lemma}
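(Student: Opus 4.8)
The plan is to recast the nonlinear equation \eqref{veq} satisfied by each $v_j$ as a \emph{linear} uniformly parabolic equation with locally bounded coefficients, and then to read off the dichotomy from the parabolic Harnack inequality together with a forward uniqueness argument. First I would merge the two gradient terms in \eqref{veq}, writing $-\frac{|\nabla v|^2}{v}+\frac{|\nabla v|^2}{v}(f'(u)F(u)-1)=(f'(u)F(u)-2)\frac{|\nabla v|^2}{v}$, so that
\[
v_s=\Delta v-\frac{y}{2}\cdot\nabla v+\bigl(f'(u)F(u)-2\bigr)\frac{|\nabla v|^2}{v}+v^2-v .
\]
Using $\frac{|\nabla v|^2}{v}=\frac{\nabla v}{v}\cdot\nabla v$ and $v^2-v=(v-1)v$, and writing $u_j$ for the value of $u$ attached to $v_j$ through \eqref{vdef}, the shifted function $v_j(y,s)=v(y,s+s_j)$ solves
\[
\partial_s v_j=\Delta v_j+b_j\cdot\nabla v_j+c_j\,v_j,\qquad b_j=-\frac{y}{2}+\bigl(f'(u_j)F(u_j)-2\bigr)\frac{\nabla v_j}{v_j},\quad c_j=v_j-1 .
\]
The decisive point is that these coefficients are bounded on any fixed compact set, uniformly in $j$ for $j$ large: $|\nabla v_j|/v_j\le C$ by \eqref{vdiffest}, $v_j$ is bounded above by the type I assumption \eqref{type1}, and $f'(u_j)F(u_j)\to1$ by Lemma \ref{fproperty1}, since Lemma \ref{esth} (through the radial monotonicity of $u$) forces $u_j\to\infty$ uniformly on compact sets of $\R^{n+1}$.

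With these uniform coefficient bounds, the parabolic Harnack inequality applies to the nonnegative solutions $v_j$ with a constant $C_H$ that is uniform in $j$, depending only on the dimension, the fixed parabolic cylinder, and the coefficient bounds. Thus for every ball $B_R$ and every $s_1<s_2$,
\[
\sup_{B_R}v_j(\cdot,s_1)\le C_H\inf_{B_R}v_j(\cdot,s_2).
\]
Passing to the limit via the uniform convergence $v_j\to v_\infty$, the same inequality holds for the nonnegative limit $v_\infty$. From this I read off two facts: a zero of $v_\infty$ at a point $(\bar y,\bar s)$ forces $v_\infty\equiv0$ on $\{s<\bar s\}$ (take $s_2=\bar s$, a ball $B_R\ni\bar y$, and let $R\to\infty$), and positivity at one point propagates forward in time.

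It then suffices to prove the dichotomy in contrapositive form: if $v_\infty(\bar y,\bar s)=0$ somewhere, then $v_\infty\equiv0$. Set $\tau=\sup\{s:\ v_\infty\equiv0 \text{ on } \{s'\le s\}\}$. Backward propagation gives $v_\infty\equiv0$ for $s<\tau$, while the definition of $\tau$ together with backward propagation forces $v_\infty>0$ for $s>\tau$; by continuity $v_\infty(\cdot,\tau)\equiv0$. On $\{s>\tau\}$ the limit is strictly positive, so $\nabla v_\infty/v_\infty$ is well defined and, as the uniform limit of the bounded quantities $\nabla v_j/v_j$, stays bounded up to $s=\tau$; combined with $f'(u_j)F(u_j)\to1$ and interior parabolic regularity, this shows $v_\infty$ solves the linear equation above on $\{s>\tau\}$ with coefficients bounded up to $\tau$ and vanishing initial trace at $s=\tau$. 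Forward uniqueness for this Cauchy problem (bounded coefficients, bounded solution) then forces $v_\infty\equiv0$ on $\{s>\tau\}$, contradicting $\tau<\infty$; hence $\tau=\infty$ and $v_\infty\equiv0$.

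I expect the main obstacle to be precisely this last step, namely controlling the drift $\nabla v_\infty/v_\infty$ uniformly up to the transition time $\tau$ so that forward uniqueness can be invoked to rule out a solution that is zero for $s\le\tau$ and positive afterwards. What makes the entire scheme go through — and is the genuinely new ingredient compared with the scale-invariant case of \cite{L89} — is the uniform divergence $u_j\to\infty$ supplied by Lemma \ref{esth}, since it is responsible both for the uniformity of the Harnack constant and for the boundedness of the coefficient $f'(u_j)F(u_j)-2$ near $\tau$.
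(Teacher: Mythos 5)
Your argument is workable in outline but takes a much heavier route than the paper, and the one step you yourself flag as ``the main obstacle'' --- forward propagation of the zero set --- is exactly the step that is not closed as written. The paper's proof is elementary and bypasses the issue entirely: from $|\nabla v|/v\leq C$ (i.e.\ \eqref{vdiffest}) one gets the spatial log-Lipschitz bound $|\log v(y_1,s)-\log v(y_2,s)|\leq C|y_1-y_2|$, hence $v(y_1,s)\leq v(y_2,s)e^{C|y_1-y_2|}$, and from the equation together with the same bounds one gets the \emph{two-sided} temporal bound $|v_s|/v\leq C(1+|y|)$ (this is \eqref{vs}), hence $v(y,s_1)\leq v(y,s_2)e^{C(1+|y|)|s_1-s_2|}$ for \emph{either} ordering of $s_1,s_2$. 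Both inequalities pass to the limit $v_\infty$, and a single zero then forces $v_\infty\equiv 0$ in space and in both time directions at once. No parabolic Harnack inequality, no uniqueness theorem for the Cauchy problem.

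Concerning your version: the backward propagation via Krylov--Safonov/Moser Harnack is fine (the constant blowing up as $R\to\infty$ is harmless since you compare against zero), but the forward step has two genuine soft spots. First, the drift in your linear equation contains $-\tfrac{y}{2}$, so the coefficients are \emph{not} bounded; ``forward uniqueness for bounded coefficients'' does not literally apply, and you would need a T\"acklind/Phragm\'en--Lindel\"of type uniqueness class result allowing linearly growing drift (true, but it must be invoked correctly). Second, the coefficient $\bigl(f'(u)F(u)-2\bigr)\nabla v_\infty/v_\infty$ is only defined where $v_\infty>0$, so the whole construction of the transition time $\tau$ and the claim that the coefficients remain controlled ``up to $s=\tau$'' requires care that the sketch does not supply. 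The irony is that the estimate which repairs all of this --- the pointwise bound on $|v_s|/v$ --- is already available to you from \eqref{veq}, \eqref{vdiffest}, the type I bound and Lemma \ref{fproperty1}, and once you have it the dichotomy follows in two lines as above (this is also how \cite{L89} argues). I would recommend replacing the Harnack-plus-uniqueness machinery by that direct computation.
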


 This lemma is taken from \cite[Lemma 4.3]{L89}, but for the reader's convenience, we provide a proof.

 \begin{proof}
	Let $y_1,y_2 \in \Omega(s)$. Replacing $|w(y,s) - w(0,s)|$ in \eqref{wbelowest} with $|w(y_1,s) - w(y_2,s)|$, we have
	\begin{equation*}
		\left|\log \frac{v(y_1,s)}{v(y_2,s)}\right| = |w(y_1,s) - w(y_2,s)| \leq C|y_1 - y_2|.
	\end{equation*}
	Therefore we obtain
	\begin{equation*}
		v(y_1,s) \leq v(y_2,s)e^{C|y_1-y_2|} \quad\ \text{for}\  s \in (s_0, \infty).
	\end{equation*}
  Then by substituting $s+s_j$ for $s$ and letting $j \rightarrow \infty$, we have
	\begin{equation*}
		v_\infty(y_1,s) \leq v_\infty(y_2,s)e^{C|y_1-y_2|}\quad \text{for}\ y_1,y_2 \in \R^n,\ s \in \R.
	\end{equation*}
	Then this inequality implies that for any $s$, if there exists a point $y \in \R^n$ such that $v_\infty(y,s) =0$, then $v_\infty(\cdot,s) \equiv 0$.
	Similarly, By using \eqref{vs} instead of \eqref{vdiffest}, it follows that for any $s_1,s_2 \in (s_0,\infty)$,
	\begin{align*}
		\left|\log \frac{v(y,s_1)}{v(y,s_2)}\right| = |w(y,s_1) - w(y,s_2)| \leq C(1+|y|)|s_1 - s_2| \quad \text{for}\  y \in \Omega(s_1) \cap \Omega(s_2).
	\end{align*}
	Thus we have
	\begin{equation*}
		v(y,s_1) \leq v(y,s_2)e^{C(1+|y|)|s_1-s_2|} \quad\ \text{for}\ y \in \Omega(s_1) \cap \Omega(s_2),
	\end{equation*}
	and the conclusion follows from the same argument as above.
 \end{proof}

%\begin{lemma}\cite[Lemma 4.5]{L89}\label{Ewindep}
%	In addition to the hypothesis in Lemma \ref{vinftypositive}, we further assume that $\nabla v(y,m+s_j) \rightarrow \nabla v_\infty (y,m)$ for any integer $m$ and $s_{j+1}-s_j \rightarrow \infty$.
%	If we define $E(0) = -\infty$, then $E[v_\infty]$ is independent of the choice of the sequence $\{s_j\}_{j \in \N}$. 
% \end{lemma}

% \begin{proof}
%	Assume for contradiction that there exist sequences $\{s_j\}_{j\in \N}$ and $\{\tilde{s}_j\}_{j\in \N}$ such that $v_j,\ \tilde{v}_j \coloneq v(y,s+\tilde{s}_j)$ converges to $v_\infty,\ \tilde{v}_\infty$ respectively and
%	with $E[v_\infty] < E[\tilde{v}_\infty]$. we can assume that $s_j \leq \tilde{s}_j$ by taking a subsequence if necessary. By integrating \eqref{energy} in $[s_j,\tilde{s}_j]$, we have
%	\begin{equation}
%		\begin{split}
%			\int_{s_j}^{\tilde{s}_j} \int_{B_{s}} \frac{v_s^2}{v^2} \rho\, dy\,ds 
%										&\leq E[v_j] - E[\tilde{v}_j] \\
%										&\quad+ \int_{s_j}^{\tilde{s}_j}\int_{B_{s^\alpha}} \frac{|\nabla v|^2}{v^2}\Bigl(f'(u)F(u) - 1\Bigr)\frac{v_s}{v}\rho\,dy\,ds + \int_{s_j}^{\tilde{s}_j}G(s)\, ds.
%		\end{split}
%	\end{equation}
% \end{proof}

 \begin{proof}[Proof of Theorem \ref{qssblowup}]
	Let $\alpha = \frac{1}{2p}$ and $\{s_j\}_{j\in \N}$ be an increasing sequence such that $s_j \rightarrow \infty $ as $j \rightarrow \infty$ and define $v_j$ as in Lemma \ref{vinftypositive}.  
	Since $v_j$ and $\nabla v_j/v_j$ are bounded in $B_k \times (-k, k)$ and the boundedness is independent of $k \in \N$ by \eqref{vdiffest} and the same agument to obtain \eqref{vs}, 
	the parabolic regularity theorem, the Arzel\`{a}--Ascoli theorem and the diagonal argument imply that there exists a function $v_\infty$ such that
	$v_j(y,s) \coloneq v(y,s+s_j) \rightarrow v_\infty$ in $C^{2,1}(\R^{n+1})$ as $j \rightarrow \infty$ by taking a subsequence, still denoted by $\{s_j\}_{j \in \N}$. 
	By taking a subsequence once more if needed, we may assume that $s_{j+1} - s_j \rightarrow \infty$ as $j \rightarrow \infty$.
	Let us prove that the function $v_\infty$ is independent of the choice of the sequence $\{s_j\}_{j\in \N}$ and then $v_\infty \equiv 1$.\par
	Let $m \in (0,\infty)$, then by integrating the inequality \eqref{energy} on $[s_j+m, s_{j+1}+m]$, we have
	\begin{equation*}
		\begin{split}
			\int_{m}^{s_{j+1}-s_j+m} \int_{B_{(s+s_j)^\alpha}} \frac{(v_j)_s^2}{v_j^2} \rho\, dy\,ds 
										\leq E[v_j](m) - E[v_{j+1}](m) + \int_{s_j+m}^{s_{j+1}+m} H(s)\, ds. 
										%&\quad+ \int_{s_j+m}^{s_{j+1}+m}\int_{B_{s^\alpha}} \frac{|\nabla v|^2}{v^2}\Bigl(f'(u)F(u) - 1\Bigr)\frac{v_s}{v}\rho\,dy\,ds.
		\end{split}
	\end{equation*}
	Let us show that $E[v_j] \rightarrow E[v_\infty]$. 
	Thanks to the estimate \eqref{vbelow}, we obtain by using Lebesgue's dominated convergence theorem that
	\begin{equation*}
		\int_{B_{s^\alpha}} \log v_j \rho \, dy \rightarrow \int_{B_{s^\alpha}} \log v_\infty \rho\, dy.
	\end{equation*}
	The convergence of remaining terms easily follows from Lebesgue's dominated convergence theorem by using the boundedness of $v$ and \eqref{vdiffest}, so we omit the details.
	Therefore, by using Lemma \ref{estenergy} and Fatou's lemma, we obtain that
	\begin{equation*}
		\int_{m}^{M} \int_{\R^n} \frac{(v_\infty)_s^2}{v_\infty^2} \rho\, dy\,ds = 0
	\end{equation*}
	for any $M>m$. Hence $(v_\infty)_s \equiv 0$ in $\R^{n}\times (0,\infty)$. By Lemma \ref{vinftypositive}, we have $v_\infty \equiv 0$ or $v_\infty > 0 $ in $ \R^{n+1}$.
	If $v_\infty \equiv 0$, then for any $\ep > 0$, $v_j(y,s) < \ep$ for $y \in B_1$ and sufficiently large $j\in \N$ (we can take $j$ independently of $y$ since $v_j$ converges to $v_\infty$ uniformly on compact sets in $\R^n$). 
	This implies 
	\begin{equation*}
		u(y\sqrt{T-t_j}, t_j) \leq F^{-1}\left(\frac{1}{\ep}(T-t_j)\right),
	\end{equation*}
	where $t_j = T - e^{-(s+s_j)}$. If we take $y = 0$ and $\ep < 1$, then this contradicts \eqref{belowrate} since $F^{-1}$ is decreasing. Therefore we have $v_\infty > 0$.
	For fixed $y\in \R^n$, it holds that $|y| \leq (-\log (T-t))^\alpha$ for any $t$ sufficiently close to $T$ depending on $y$, 
	therefore we obtain from Lemma \ref{esth} and radially nonincreasing property of the solution $u$ that $u(y\sqrt{T-t},t) \rightarrow \infty$ in $B_{(-\log (T-t))^\alpha}$ as $t \rightarrow T$ and thus 
	\begin{equation*}
			f(u(y\sqrt{T-t},t))F(u(y\sqrt{T-t},t)) - 1 \rightarrow 0 \quad \text{in}\  B_{(-\log (T-t))^\alpha}\  \text{as}\ t \rightarrow T 
	\end{equation*}
	by Lemma \ref{fproperty1}.
  Since $v$ satisfies the equation \eqref{veq}, the limit $v_\infty$ is a positive bounded stationary solution to \eqref{veqlim}, then $v_\infty \equiv 1 $ in $\R^n \times (0,\infty)$ by Lemma \ref{stasol}.
	Here, we have just proved that for any sequence $\{s_j\}_{j \in \N}$, there exsits a subsequence of $\{s_j\}_{j \in \N}$ such that $v_j(y,s) = v(y,s+s_j) \rightarrow 1$ as $j \rightarrow \infty$ in $\R^n \times (0,\infty)$. 
	Hence it follows that $v(y,s) \rightarrow 1$ as $s \rightarrow \infty$ uniformly on compact sets of $\R^n$, which means \eqref{eq:qssblowup} holds. 
	We complete the proof of Theorem \ref{qssblowup}.
 \end{proof}

\vspace{5mm}
\noindent
{\bf Acknowledgments.}
The author would like to thank his supervisor Kentaro Fujie for his constructive advice and encouragement during the preparation of this paper.


\begin{thebibliography}{99}
%

\bibitem{C25-1}
 \sc L.D. Chabi, 
  \it Asymptotic blow-up behavior for the semilinear heat equation with non scale invariant nonlinearity.
	\rm  J. Differ. Equ. {\bf 435} (2025), 113303.


\bibitem{C25-2}
 \sc L.D. Chabi, 
  \it Refined Blow-up Behavior for Reaction-Diffusion Equations with Non Scale Invariant Exponential Nonlinearities.
	\rm  J. Dyn. Diff. Equat. (2025), to appear


\bibitem{CS25}
 \sc L.D. Chabi, Ph. Souplet,
  \it Refined behavior and structural universality of the blow-up profile for the semilinear heat equation with non scale invariant nonlinearity.
	\rm  Math. Ann. {\bf 391} (2025), 4509--4554.


\bibitem{FP08}
 \sc M. Fila, A. Pulkkinen, 
  \it Nonconstant selfsimilar blow-up profile for the exponential reaction-diffusion equation.
	\rm Tohoku Math. J.  {\bf 60} (2008), 303--328.


\bibitem{FM85}
 \sc A. Friedman, B. McLeod, 
  \it Blow-up of positive solutions of semilinear heat equations.
	\rm Indiana Univ. Math. J.  {\bf 34} (1985), 425--447.


\bibitem{F14}
 \sc Y. Fujishima,  
  \it Blow-up set for a superlinear heat equation and pointedness of the initial data.
	\rm Discrete Contin. Dyn. Syst. {\bf 34} (2014),  4617--4645.


\bibitem{FI18}
 \sc Y. Fujishima, N. Ioku, 
  \it Existence and nonexistence of solutions for the heat equation with a superlinear source term.
	\rm J. Math. Pures Appl. {\bf 118} (2018), 128--158.


\bibitem{FI22}
 \sc Y. Fujishima, N. Ioku, 
  \it Global in time solvability for a semilinear heat equation without the self-similar structure.
	\rm Partial Differ. Equ. Appl. {\bf 3} (2022), 23.


\bibitem{FI23}
 \sc Y. Fujishima, N. Ioku, 
  \it Quasi self-similarity and its application to the global in time solvability of a superlinear heat equation.
	\rm Nonlinear Anal. {\bf 236} (2023), 113321.


\bibitem{FK25}
 \sc Y. Fujishima, T. Kan, 
  \it Uniform boundedness and blow-up rate of solutions in non-scale-invariant superlinear heat equations.
	\rm J. Elliptic Parabol. Equ. (2025), to appear.


\bibitem{GK85}
 \sc Y. Giga, R.V. Kohn, 
  \it Asymptotically self-similar blow-up of semilinear heat equations.
	\rm Commun. Pure Appl. Math. {\bf 38} (1985), 297--319.  


\bibitem{GK87}
 \sc Y. Giga, R.V. Kohn, 
  \it Characterizing Blow-up using Similarity Variables.
	\rm Indiana Univ. Math. J. {\bf 36} (1987), 1--40.
	

\bibitem{GK89}
 \sc Y. Giga, R.V. Kohn, 
  \it Nondegeneracy of blowup for semilinear heat equations.
	\rm Commun. Pure Appl. Math. {\bf 45} (1989), 845--884.


\bibitem{GMS04}
 \sc Y. Giga, S. Matsui, S. Sasayama,
  \it Blow up rate for semilinear heat equations with subcritical nonlinearity.
	\rm Indiana Univ. Math. J. {\bf 53} (2004), 483--514.


\bibitem{HZ22}
 \sc M.A. Hamza, H. Zaag, 
  \it The blow-up rate for a non-scaling invariant semilinear heat equation.
	\rm Arch. Ration. Mech. Anal. {\bf 244} (2022), 87--125.


\bibitem{HVpre}
 \sc M.A. Herrero, J.J.L. Vel\'{a}zquez,
 \it A blow up result for semilinear heat equations in the supercritical case.
 \rm (preprint).


\bibitem{K63}
 \sc S. Kaplan, 
  \it On the growth of solutions of quasi-linear parabolic equations.
	\rm Commun. Pure Appl. Math. {\bf 16} (1963), 305--330.


\bibitem{L89}
 \sc W. Liu, 
  \it The blow-up rate of solutions of semilinear heat equations.
	\rm J. Differ. Equ. {\bf 77} (1989), 104--122.  


\bibitem{MM04}
 \sc H. Matano, F. Merle, 
  \it On nonexistence of type II blowup for a supercritical nonlinear heat equation.
	\rm Commun. Pure Appl. Math. {\bf 57} (2004), 1494--1541.  


\bibitem{M18}
 \sc Y. Miyamoto, 
 \it A limit equation and bifurcation diagrams for semilinear elliptic equations with general supercritical growth.
 \rm J. Differ. Equ. {\bf 264} (2018), 2684--2707.


\bibitem{QSbook}
 \sc P. Quittner, Ph. Souplet, 
  \it Superlinear Parabolic Problems. Blow-up, Global Existence and Steady States. Second Edition.
	\rm Birkh\"{a}user Advanced Texts, (2019).


\bibitem{S19}
 \sc Ph. Souplet, 
  \it  A simplified approach to the refined blowup behavior for the nonlinear heat equation.
	\rm  SIAM J. Math. Anal. {\bf 51} (2019), 991--1013.


\bibitem{S22}
 \sc Ph. Souplet, 
  \it  On refined blowup estimates for the exponential reaction-diffusion equation.
	\rm  Partial Differ. Equ. Appl. {\bf 3} (2022), 16.

\bibitem{V92}
 \sc J.J.L. Vel\'{a}zquez,
 \it Higher-dimensional blow up for semilinear parabolic equations.
 \rm Comm. Partial Differ. Equ. {\bf 17} (1992), 1567--1596. 


\end{thebibliography}
\end{document}